\newtheorem{thm}{Theorem}
\newtheorem{lem}{Lemma}
\newtheorem{defn}{Definition}
\newtheorem{rem}{Remark}
\newtheorem{conj}{Conjecture}
\newtheorem{prob}{Problem}
\newtheorem{example}{Example}
\renewcommand{\Re}{\mathbb R}
\newcommand{\E}{\mathbb E}
\newcommand{\KK}{\mathbf K}
\newcommand{\PP}{\mathbf P}
\newcommand{\QQ}{\mathbf Q}
\newcommand{\TT}{\mathbf T}
\newcommand{\K}{\mathcal{K}}
\newcommand{\C}{\mathcal{C}}
\newcommand{\B}{\mathcal{B}}
\newcommand{\F}{\mathcal{F}}
\newcommand{\xx}{\mathbf x}
\newcommand{\yy}{\mathbf y}
\newcommand{\pp}{\mathbf p}
\newcommand{\qq}{\mathbf q}
\newcommand{\rr}{\mathbf r}
\newcommand{\uu}{\mathbf u}
\newcommand{\oo}{\mathbf o}
\newcommand{\Sph}{\mathbb{S}}
\DeclareMathOperator{\bd}{bd}
\DeclareMathOperator{\conv}{conv}
\DeclareMathOperator{\proj}{proj}
\begin{document}

\title{On non-separable families of positive homothetic convex bodies
\footnote{Keywords: convex body, positive homothets, non-separable family, $k$-impassable family.
2010 Mathematics Subject Classification: 52C17, 05B40, 11H31, and 52C45. UDC Classification: 514.}}

\author{K\'{a}roly Bezdek\thanks{Partially supported by a Natural Sciences and
Engineering Research Council of Canada Discovery Grant.} and Zsolt L\'angi\thanks{Partially supported by the J\'anos Bolyai Research Scholarship of the Hungarian Academy of Sciences and the OTKA K\_16 grant 119495.}}

\maketitle

\begin{abstract}
A finite family $\B$ of balls with respect to an arbitrary norm in $\Re^d$ ($d\ge 2$) is called a non-separable family if there is no hyperplane disjoint from $\bigcup \B$ that strictly separates some elements of $\B$ from all the other elements of $\B$ in $\Re^d$. In this paper we prove that if $\B$ is a non-separable family of 
balls of radii $r_1, r_2,\ldots , r_n$ ($n\geq 2$) with respect to an arbitrary norm in $\Re^d$ ($d\ge 2$), then $\bigcup \B$ can be covered by a ball of radius $\sum_{i=1}^n r_i$. This was conjectured by Erd\H os for the Euclidean norm and was proved for that case by A. W. Goodman and R. E. Goodman [Amer. Math. Monthly 52 (1945), 494-498]. On the other hand,
in the same paper A. W. Goodman and R. E. Goodman conjectured that their theorem extends to arbitrary non-separable finite families of positive homothetic convex bodies in $\Re^d$, $d\ge2$. Besides giving a counterexample to their conjecture, we prove that conjecture under various additional conditions.
\end{abstract}

\section{Introduction}

In this paper we identify a $d$-dimensional affine space with $\Re ^d$. By $\|\cdot\|$ and $\langle \cdot , \cdot \rangle$ we denote the canonical Euclidean norm and the canonical inner product on $\Re ^d$. A $d$-dimensional convex body $\KK$ is a compact convex subset of $\Re^d$ with non-empty interior. Moreover, $\KK$ is called $\oo$-\textit{symmetric} if $\KK$ is symmetric about the origin $\oo$ in $\Re^d$. The Minkowski sum or simply the vector sum of two convex bodies $\KK,\KK'\subset \Re^d$ is defined by 
$\KK+\KK'=\{\mathbf{k}+\mathbf{k}'\ |\  \mathbf{k}\in \KK, \mathbf{k}'\in \KK'\}$.
A homothetic copy of $\KK$ is a set of the form $\KK'=\xx+\tau \KK$, where $\tau$ is a non-zero real number and $\xx\in \Re^d$. If $\tau>0$, then $\KK'$ is said to be a positive homothetic copy of $\KK$. Moreover, let $\conv (X)$ stand for the convex hull of $X \subseteq \Re^d$. 

In 1945 A. W. Goodman and R. E. Goodman (\cite{GG}) proved a conjecture of Erd\H os on {\it non-separable families} of circular disks in $\Re^2$. In order to state that result we need the following definition.

\begin{defn}\label{erdos}
Let $\KK$ be a convex body in $\Re^d$ and let $\K = \{ \xx_i + \tau_i \KK\ |\  \xx_i\in \Re^d, \tau_i>0, i=1,2,\ldots, n\}$, where $d\ge 2$ and $n\ge 2$.
Assume that $\K$ is a {\rm non-separable family} in short, an {\rm NS-family}, meaning that every hyperplane intersecting $\conv \left( \bigcup \K \right)$ intersects a member of $\K$ in $\Re^d$, i.e., there is no hyperplane disjoint from $\bigcup \K$ that strictly separates some elements of $\K$ from all the other elements of $\K$ in $\Re^d$. Then, let $\lambda(\K) > 0$ denote the smallest positive value $\lambda$ such that a translate of $\lambda \left( \sum\limits_{i=1}^n \tau_i \right) \KK$ covers $\bigcup \K$.
\end{defn}

If $F: \Re^d \to \Re^d$ is an arbitrary invertible affine map, then it is straightforward to see that $F( \K)=\{ F(\xx_i+ \tau_i \KK)\ |\  \xx_i\in \Re^d, \tau_i>0, i=1,2,\ldots, n\}$ is an NS-family of convex bodies homothetic to $F(\KK)$ with homothety ratios $\tau_1,\tau_2,\ldots ,\tau_n$ such that $\lambda(\K) = \lambda(F( \K))$. Based on this property we can restate the main result of \cite{GG} as follows.

\begin{thm}[Goodman-Goodman]\label{Goodman-Goodman-theorem}
If $\C$ is an arbitrary NS-family of finitely many homothetic ellipses in $\Re^2$, then $\lambda(\C)\le 1$.
\end{thm}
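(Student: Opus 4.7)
The plan is to reduce the statement for homothetic ellipses to the original Goodman--Goodman theorem for Euclidean disks \cite{GG} via an affine transformation. Every ellipse in $\Re^2$ is the image of a round disk under some invertible affine map $F\colon \Re^2\to\Re^2$; if $\KK$ is the ellipse of which every member of $\C$ is a positive homothet, I choose $F$ so that $F(\KK)$ is the Euclidean unit disk $\BB^2$. By the affine invariance recorded in the paragraph preceding the theorem, $F(\C)$ is again an NS-family of positive homothets of $\BB^2$, that is, a non-separable finite family of Euclidean disks with the same homothety ratios $\tau_1, \ldots, \tau_n$, and moreover $\lambda(\C)=\lambda(F(\C))$. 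The only point to double-check in this reduction is that non-separability really transfers through $F$, but since $F$ is a homeomorphism carrying hyperplanes to hyperplanes and satisfies $F\!\left(\bigcup \C\right)=\bigcup F(\C)$, every hyperplane separating $F(\C)$ while being disjoint from $\bigcup F(\C)$ is the $F$-image of a hyperplane separating $\C$ while being disjoint from $\bigcup \C$, so no such hyperplane exists.

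With this reduction in hand, the inequality $\lambda(F(\C))\le 1$ asserts exactly that the union of any non-separable finite family of Euclidean disks of radii $\tau_1,\ldots,\tau_n$ in $\Re^2$ can be covered by a disk of radius $\sum_{i=1}^n \tau_i$. This is precisely the Erd\H os conjecture settled in \cite{GG}. Since $\lambda(\C)=\lambda(F(\C))$, I conclude $\lambda(\C)\le 1$, which is the desired bound.

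The main obstacle on this path lies not in the ellipse case but in the underlying disk case. A natural way to reprove that, if one did not wish merely to cite \cite{GG}, would be by induction on $n$. The base case $n=1$ is trivial, and $n=2$ is immediate: non-separability of two disks in $\Re^2$ forces the distance between their centers to be at most $r_1+r_2$, hence their union lies in a disk of radius $r_1+r_2$ centered at a suitable point on the segment joining the centers. For $n\ge 3$ the strategy would be to locate two members $D_i,D_j\in\C$ whose union is covered by a single disk $D^*$ of radius $r_i+r_j$ so that replacing $D_i,D_j$ by $D^*$ produces a non-separable family of $n-1$ positive homothets, to which the inductive hypothesis applies. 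The delicate step is placing $D^*$ so that any separating hyperplane of the reduced family (disjoint from its union) would already have been a separating hyperplane of the original family, contradicting non-separability; this is the geometric heart of \cite{GG}.
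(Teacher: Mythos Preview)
Your first two paragraphs are correct and match the paper's own treatment: the paper also presents Theorem~\ref{Goodman-Goodman-theorem} as the disk result of \cite{GG} transported to ellipses via the affine invariance of $\lambda$, without giving an independent proof at that point.

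Your third paragraph, however, contains a real error. The inductive merging strategy you outline is \emph{not} the argument in \cite{GG}, and in fact it breaks down. The inductive step needs two disks $D_i,D_j$ whose union fits in a single disk of radius $r_i+r_j$, which forces their centers to lie within distance $r_i+r_j$ of one another. But a non-separable family of $n\ge 3$ disks need not contain any such pair: Example~\ref{ex:nohelly} places $2k+1$ congruent, pairwise \emph{disjoint} disks at the vertices of a regular $(2k+1)$-gon so that consecutive disks almost (but do not) touch; the family is non-separable, yet every pair of centers is farther apart than $2r$, so no merge of the required radius is available and the induction stalls at the very first step. Your assertion that this merging is ``the geometric heart of \cite{GG}'' is therefore mistaken.

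The actual Goodman--Goodman argument---reproduced in this paper as the proof of Theorem~\ref{centrally symmetric convex bodies}, which extends Theorem~\ref{Goodman-Goodman-theorem} to all centrally symmetric bodies in $\Re^d$---is global rather than inductive. One sets $\xx=\bigl(\sum_i\tau_i\xx_i\bigr)\big/\bigl(\sum_i\tau_i\bigr)$ and shows that $\xx+\bigl(\sum_i\tau_i\bigr)\KK_0$ covers $\bigcup\C$ by projecting orthogonally onto every line through $\oo$ and invoking the one-dimensional Lemma~\ref{lem:GG}: if closed intervals of half-lengths $\tau_i$ have connected union, then the interval of half-length $\sum_i\tau_i$ centred at their $\tau$-weighted centroid covers that union. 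No pairwise merging is involved.
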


For a completely different proof of Theorem~\ref{Goodman-Goodman-theorem} we refer the interested reader to Theorem 6.1 and its proof in \cite{BeLi}. On the other hand, on page 498 of \cite{GG} A. W. Goodman and R. E. Goodman put forward the following conjecture.

\begin{conj}[Goodman-Goodman(1945)]\label{Goodman-Goodman-1945}
For every convex body $\KK$ in $\Re^d$ and every NS-family $\K = \{ \xx_i + \tau_i \KK\ |\  \xx_i\in \Re^d, \tau_i>0, i=1,2,\ldots, n\}$ 
the inequality $\lambda(\K) \leq 1$ holds for all $d\ge 2$ and $n\ge 2$.
\end{conj}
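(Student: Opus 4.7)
The natural approach mirrors the one available in the norm case stated in the abstract: for each direction one uses non-separability to obtain a chain of homothets whose total extent bounds the extent of $\bigcup \K$ in that direction, and one then tries to convert these one-dimensional bounds into a global covering. After translating $\KK$ so that $\oo \in \inter \KK$, the projection of $\xx_i + \tau_i \KK$ onto the line spanned by a direction $\uu \in \Sph^{d-1}$ is the interval
$$ I_i(\uu) \;=\; \bigl[\langle \xx_i,\uu\rangle - \tau_i h_{\KK}(-\uu),\; \langle \xx_i,\uu\rangle + \tau_i h_{\KK}(\uu)\bigr] $$
of length $\tau_i\bigl(h_{\KK}(\uu)+h_{\KK}(-\uu)\bigr)$, where $h_{\KK}$ is the support function of $\KK$. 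Non-separability says exactly that any hyperplane with normal $\uu$ that meets $\conv\bigl(\bigcup \K\bigr)$ meets some member of $\K$, which is equivalent to $\bigcup_i I_i(\uu)$ being a single interval. Comparing the length of this interval with $\sum_i |I_i(\uu)|$ yields the width inequality
$$ \w\!\left(\bigcup \K,\uu\right) \;\le\; R\bigl(h_{\KK}(\uu)+h_{\KK}(-\uu)\bigr) \;=\; \w(R\,\KK,\uu), \qquad R := \sum_{i=1}^n \tau_i, $$
for every direction $\uu$.

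The heart of the argument would then be to pass from these width inequalities to the genuine covering $\bigcup \K \subset \pp + R\,\KK$. Using the identity $\KK \ominus \lambda\KK = (1-\lambda)\KK$ for $\lambda\in[0,1]$, one can reformulate the covering as the non-emptiness of the intersection
$$ \bigcap_{i=1}^n \bigl(\xx_i + (R-\tau_i)(-\KK)\bigr), $$
so by Helly's theorem it suffices to control every $(d+1)$-tuple of these translates. For $n=2$ this step is automatic: a short calculation shows that the pairwise intersection is non-empty precisely when $\xx_2-\xx_1 \in \tau_1\KK - \tau_2\KK$, which is exactly the condition that $(\xx_1+\tau_1\KK)\cap(\xx_2+\tau_2\KK)\ne\emptyset$. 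So the conjecture holds unconditionally for two homothets, and I would attempt a Helly-type reduction that uses the non-separability of the full family to produce the two-body relations required for the $(d+1)$-tuples that appear in the Helly step.

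The hard part is precisely this final amalgamation. For centrally symmetric $\KK$ one has $\tau_i\KK - \tau_j\KK = \tau_j\KK - \tau_i\KK$, so the chain relations coming from non-separability of a subfamily are already in the form required by the covering, and a Helly-type combination is plausible; this is the reason the argument works in the norm case of the abstract and in the classical Goodman-Goodman theorem for ellipses. For asymmetric $\KK$, however, $\tau_i\KK - \tau_j\KK$ and $\tau_j\KK - \tau_i\KK$ are genuinely distinct sets, so the relations produced by non-separability on triples of homothets point ``in the wrong direction'' relative to those needed for a common containing translate of $R\,\KK$, and no routine Helly amalgamation bridges the gap. I therefore expect the plan above to break for asymmetric $\KK$ with $n\ge 3$, which is consistent with the counterexample announced in the abstract, and to succeed only under an additional symmetry assumption on $\KK$ or an alignment hypothesis on the configuration of the homothets.
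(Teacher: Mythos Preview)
Your analysis is essentially sound, but the bottom line should be stated more sharply: the conjecture is \emph{false}, and the paper does not prove it but rather gives an explicit counterexample. Three unit regular triangles placed in a regular triangle of side $2+\tfrac{2}{\sqrt{3}}$, with each unit triangle flush against one side and shifted so that the convex hull of any two just touches the third, form an NS-family with $\lambda(\K)=\tfrac{2}{3}+\tfrac{2}{3\sqrt{3}}>1$. So your suspicion that the amalgamation step fails for asymmetric $\KK$ with $n\ge 3$ is exactly right, and no completion of your outline is possible in general.

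Your width inequality $\w(\bigcup\K,\uu)\le R\,\w(\KK,\uu)$ is correct and coincides with what the paper records as its basic projection lemma; the paper uses it (together with a comparison of $\KK$ with its central symmetral) to get the weaker universal bound $\lambda(\K)\le d$. Your treatment of $n=2$ is also correct and matches the paper's remark that the conjecture is trivial in that case.

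Where your sketch and the paper diverge is the centrally symmetric case. You propose reformulating the covering as the non-emptiness of $\bigcap_i\bigl(\xx_i+(R-\tau_i)(-\KK)\bigr)$ and then invoking Helly on $(d{+}1)$-tuples, hoping that the pairwise/chain relations from non-separability feed the Helly step. The paper bypasses Helly entirely: it fixes the explicit point $\xx=\bigl(\sum_i\tau_i\xx_i\bigr)/\bigl(\sum_i\tau_i\bigr)$ and shows, via a one-dimensional lemma on weighted interval centers, that $\xx+R\,\KK_0$ covers $\bigcup\K$ by comparing support functions direction by direction. This is both simpler and constructive. By contrast, your Helly route is left at the level of ``plausible'': even for symmetric $\KK$, it is not clear how the chain relations produced by non-separability in a single direction yield that every $(d{+}1)$ of the sets $\xx_i+(R-\tau_i)\KK$ intersect, since different tuples may require different directions and the information does not obviously combine. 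So that part of your outline, while pointing in a reasonable direction, remains a genuine gap even in the symmetric case.
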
 

We note that it is straightforward to prove Conjecture~\ref{Goodman-Goodman-1945} for $n={\rm card}(\K)=2$ and $d\ge 2$. However, in what follows we give a counterexample to Conjecture~\ref{Goodman-Goodman-1945} for all ${\rm card}(\K)\ge 3$ and $d\geq 2$. Furthermore, we give a sharp upper bound on $\lambda(\K)$ for all NS-families $\K$ with ${\rm card}(\K)=3$ in $\Re^2$. 
Next we show that $\lambda(\K) \leq d$ holds for all $d\ge 2$. Then generalizing Theorem~\ref{Goodman-Goodman-theorem} we prove Conjecture~\ref{Goodman-Goodman-1945} for all centrally symmetric convex bodies $\KK$ in $\Re^d$, $d\ge 2$.
Moreover, we prove Conjecture~\ref{Goodman-Goodman-1945} for natural subfamilies of the family of NS-families namely, for {\it $k$-impassable families} in short, {\it $k$-IP-families} of positive homothetic convex bodies in $\Re^d$ whenever $0\le k\le d-2$. 

We conclude this section by inviting the interested reader to further investigate the basic problem of Goodman-Goodman rephrased as follows.

\begin{prob}
Find $\sup_{\K} \lambda(\K)$ for any given $d\ge 2$, where $\K$ runs over the NS-families of finitely many positive homothetic copies of an arbitrary convex body $\KK$ in $\Re^d$. In
particular, is there an absolute constant $c>0$ such that $\sup_{\K} \lambda(\K)\leq c$ holds for all $d\ge 2$?
\end{prob}

The results of this paper imply that 
$\frac{2}{3}+\frac{2}{3\sqrt{3}} = 1.0515\ldots \leq\sup_{\K} \lambda(\K)\leq d\  {\rm for}\ {\rm all}\ d\ge2$.

\section{Counterexample to Conjecture~\ref{Goodman-Goodman-1945} for ${\rm card}(\K)\ge 3$ in $\Re^d$, $d\geq2$}

\begin{figure}[h]
\begin{center}
\includegraphics[width=0.3\textwidth]{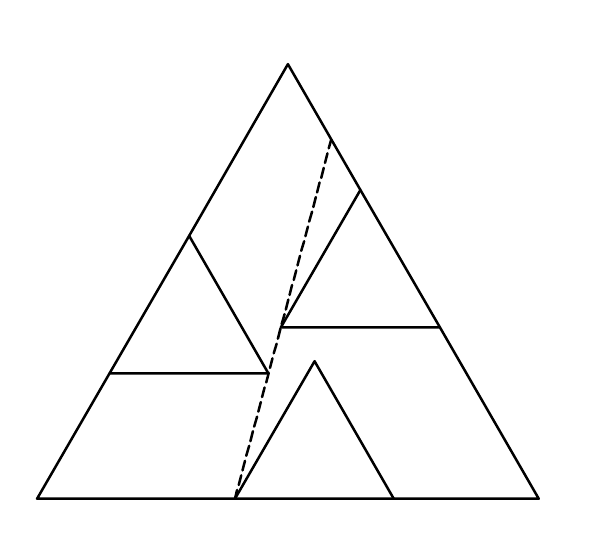}
\caption{A counterexample in the plane for three triangles}
\label{fig:counterexample2d_3p}
\end{center}
\end{figure}

\begin{example}\label{ex:counterexample2d_3p}
Place three regular triangles ${\mathcal T}=\{\TT_1, \TT_2, \TT_3\}$ of unit side lengths into a regular triangle $\TT$ of side length $2+\frac{2}{\sqrt{3}} = 3.154700\ldots > 3$ such that
\begin{itemize}
\item each side of $\TT$ contains a side of $\TT_i$, for $i=1,2,3$, respectively (cf. Figure~\ref{fig:counterexample2d_3p}),
\item for $i=1,2,3$, the vertices of $\TT_i$ contained in a side of $\TT$ divide this side into three segments of lengths $\frac{2}{3}+\frac{1}{\sqrt{3}}$, $1$, and $\frac{1}{3}+\frac{1}{\sqrt{3}}$, in counter-clockwise order.
\end{itemize}
A simple computation yields that the convex hull of any two of the small triangles touches the third triangle, and hence, no two of them can be strictly separated from the third one. Thus, $\lambda({\mathcal T})=\frac{2}{3} +\frac{2}{3\sqrt{3}} = 1.0515\ldots  >1$ and ${\mathcal T}$ is a counterexample to Conjecture~\ref{Goodman-Goodman-1945} for $n=3$ in $\Re^2$.
\end{example}

\begin{figure}[h]
\begin{center}
\includegraphics[width=0.3\textwidth]{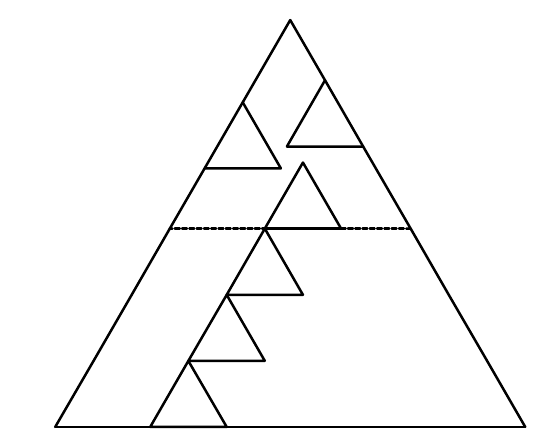}
\caption{A counterexample in the plane for $n$ triangles}
\label{fig:counterexample2d_np}
\end{center}
\end{figure}

\begin{rem}
Note that for any value $n \geq 4$, placing $n-3$ sufficiently small triangles inside $\TT$ in Example~\ref{ex:counterexample2d_3p} yields a counterexample to Conjecture~\ref{Goodman-Goodman-1945} for $n$ positive homothetic copies of a triangle in $\Re^2$. Furthermore, we can extend Example~\ref{ex:counterexample2d_3p}  using $n-3$ translates of the small triangles of Example~\ref{ex:counterexample2d_3p}, as in Figure~\ref{fig:counterexample2d_np}. As the small triangles have unit side length, the smallest triangle containing them has side length $n-1+\frac{2}{\sqrt{3}} > n$ and so, we get another counterexample to Conjecture~\ref{Goodman-Goodman-1945} using $n$ translates of a triangle in $\Re^2$.
\end{rem}

\begin{rem}\label{rem:counterexample_in_dd}
Let $d \geq 2$, and let $\lambda_s^d$ denote the supremum of $\lambda(\K)$, where $\K$ runs over the NS-families of finitely many positive homothetic $d$-simplices in $\Re^d$.
Then $\lambda_s^d$ is a non-decreasing sequence of $d$.
\end{rem}

\begin{proof}
Let $\K$ be an NS-family of finitely many $(d-1)$-simplices in a hyperplane of $\Re^d$. Clearly, we can extend the elements of $\K$ to homothetic $d$-simplices such that each element is a facet of its extension. Then, denoting this extended family by $\K'$, we have $\lambda(\K) = \lambda(\K')$, and $\K'$ is an NS-family.
\end{proof}

\begin{figure}[h]
\begin{center}
\includegraphics[width=0.3\textwidth]{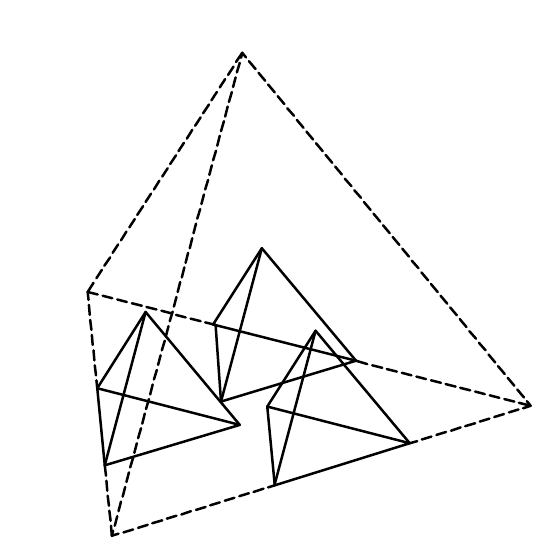}
\caption{A counterexample in $\Re^3$ for three tetrahedra}
\label{fig:counterexample3d_3p}
\end{center}
\end{figure}

Figure~\ref{fig:counterexample3d_3p} shows how to extend the configuration in Example~\ref{ex:counterexample2d_3p} to $\Re^3$, implying that $\lambda_s^d \geq \lambda_s^2\geq\frac{2}{3}+\frac{2}{3\sqrt{3}}= 1.0515\ldots>1$ for all $d\ge 3$.

\begin{rem}\label{reduction-to-simplices}
In fact, $\lambda_s^d=\sup_{\K} \lambda(\K)$ for all $d\ge 2$, where $\K$ runs over the NS-families of finitely many positive homothetic copies of an arbitrary convex body $\KK$ in $\Re^d$.
\end{rem}

\begin{proof}
Clearly, $\lambda_s^d\leq\sup_{\K} \lambda(\K)$. So, it is sufficient to show that for every convex body $\KK$ in $\Re^d$ and every NS-family $\K = \{ \xx_i + \tau_i \KK\ |\  \xx_i\in \Re^d, \tau_i>0, i=1,2,\ldots, n\}$ a translate of $\lambda_s^d \left( \sum\limits_{i=1}^n \tau_i \right) \KK$ covers $\bigcup \K$. Now, according to Lutwak's containment theorem (\cite{Lu}) if $\KK_1$ and $\KK_2$ are convex bodies in $\Re^d$ such that every circumscribed simplex of $\KK_2$ has a translate that covers $\KK_1$, then $\KK_2$ has a translate that covers $\KK_1$. (Here a circumscribed simplex of $\KK_2$ means a $d$-simplex of $\Re^d$ that contains $\KK_2$ such that each facet of the $d$-simplex meets $\KK_2$.) Thus, if $\Delta (\KK)$ is a circumscribed simplex of $\KK$, then $\lambda_s^d \left( \sum\limits_{i=1}^n \tau_i \right) \Delta(\KK)$ is a circumscribed simplex of $\lambda_s^d \left( \sum\limits_{i=1}^n \tau_i \right) \KK$ and $\xx_i + \tau_i \Delta(\KK)$ is a circumscribed simplex of $ \xx_i + \tau_i \KK$ for all $i=1,2,\ldots, n$. Furthermore, $\{ \xx_i + \tau_i \Delta(\KK)\ |\  \xx_i\in \Re^d, \tau_i>0, i=1,2,\ldots, n\}$ is an NS-family and therefore $\lambda_s^d \left( \sum\limits_{i=1}^n \tau_i \right) \Delta(\KK)$ has a translate that covers $\bigcup \{ \xx_i + \tau_i \Delta(\KK)\ |\  \xx_i\in \Re^d, \tau_i>0, i=1,2,\ldots, n\}\supseteq   \bigcup \K$, which completes our proof via Lutwak's containment theorem.
\end{proof}

\section{Upper bounding $\lambda(\K)$ for ${\rm card}(\K) =3$ in $\Re^2$}

\begin{thm}\label{thm:3homothetic}
Let $\KK$ be a convex body in $\Re^2$. If $\F$ is an NS-family of three positive homothetic copies of $\KK$, with homothety ratios $\tau_1, \tau_2, \tau_3$, respectively, then there is a translate of $\left( \frac{2}{3} +\frac{2}{3\sqrt{3}}\right)\left( \sum\limits_{i=1}^3 \tau_i \right)\KK$ containing $\F$. Furthermore, the constant $\frac{2}{3} +\frac{2}{3\sqrt{3}}= 1.0515\ldots$ is the smallest real number with this property.
\end{thm}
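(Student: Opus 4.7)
The plan is first to reduce to the case of an NS-triple of positive homothets of a regular triangle, and then to carry out a constrained maximization using the NS-condition. Since the Lutwak-containment argument in the proof of Remark~\ref{reduction-to-simplices} preserves the cardinality of the family, the claim for a general convex body $\KK$ in $\Re^2$ reduces to the case when $\KK$ is a triangle; as $\lambda(\F)$ and the NS-property are invariant under invertible affine maps of $\Re^2$, I further assume $\KK=T$ is the regular triangle of unit side length. Writing $T_i=\xx_i+\tau_i T$ and letting $\uu_1,\uu_2,\uu_3$ denote the outer unit normals to the sides of $T$, set $h_j(X)=\max_{\xx\in X}\langle\xx,\uu_j\rangle$. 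The smallest positive homothet of $T$ whose translate covers $T_1\cup T_2\cup T_3$ has side length
\[
  \sigma \;=\; \tfrac{2}{\sqrt{3}}\sum_{j=1}^{3}\max_{i}h_j(T_i),
\]
and the goal is $\sigma\le\bigl(\tfrac23+\tfrac{2}{3\sqrt3}\bigr)(\tau_1+\tau_2+\tau_3)$.

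Let $T^\ast$ denote the minimal enclosing positive homothet, so each of its sides is touched by some $T_i$; for each side $j$, pick $\phi(j)\in\{1,2,3\}$ with $T_{\phi(j)}$ tangent to side $j$. If $\phi$ can be chosen non-surjective, then some $T_k$ is redundant, and $T^\ast$ is already the minimal enclosing positive homothet of the pair $\{T_i,T_j\}$ with $\{i,j,k\}=\{1,2,3\}$. A short analysis splits this into the sub-case where $\{T_i,T_j\}$ is itself NS (so the easy two-body case of Conjecture~\ref{Goodman-Goodman-1945} gives $\sigma\le\tau_i+\tau_j$) and the sub-case where $T_k$ must meet every line separating $T_i$ from $T_j$, which bounds the gap between $T_i$ and $T_j$ by the width of $T_k$. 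In either sub-case one obtains $\sigma\le\tau_1+\tau_2+\tau_3$, strictly stronger than the claim. So I focus on the main case, $\phi$ bijective; after relabeling, $T_j$ is tangent to side $j$ of $T^\ast$ for each $j=1,2,3$.

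Coordinatize $T^\ast$ as the regular triangle of side $\sigma$ with vertices $V_1,V_2,V_3$ (side $j$ opposite $V_j$), and parametrize $T_j$ by its edge-coordinates $p_j,q_j\ge0$: the distances from $V_{j+1}$ and $V_{j-1}$ respectively to the two endpoints of $T_j\cap(\text{side }j)$, so that $p_j+\tau_j+q_j=\sigma$. For each cyclic triple $(i,j,k)$ the NS-condition $\conv(T_i\cup T_k)\cap T_j\ne\emptyset$ unwinds, by computing where the outer edge of $\conv(T_i\cup T_k)$ joining a vertex of $T_i$ to a vertex of $T_k$ crosses the line of side $j$ and comparing with $T_j$'s footprint, into a single linear inequality among $p_i,q_k,\tau_i,\tau_k,\sigma$. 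These three cyclic NS-inequalities together with the identities $p_j+\tau_j+q_j=\sigma$ describe a cyclically symmetric polytope $P(\tau_1,\tau_2,\tau_3)$ of admissible configurations.

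The main obstacle is the resulting constrained optimization. For fixed $(\tau_1,\tau_2,\tau_3)$ I maximize $\sigma$ over $P(\tau_1,\tau_2,\tau_3)$; linearity of the constraints forces the optimum to occur when all three NS-inequalities are simultaneously tight, which pins down $(p_j,q_j)$ and expresses $\sigma$ as an explicit symmetric rational function of $(\tau_1,\tau_2,\tau_3)$. A final exchange/Lagrange argument then shows that the quotient $\sigma/(\tau_1+\tau_2+\tau_3)$ is maximized at the totally symmetric point $\tau_1=\tau_2=\tau_3$; the resulting extremal configuration coincides with that of Example~\ref{ex:counterexample2d_3p} and yields $\sigma=\bigl(\tfrac23+\tfrac{2}{3\sqrt3}\bigr)(\tau_1+\tau_2+\tau_3)$, as required. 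Sharpness of the constant is already witnessed by Example~\ref{ex:counterexample2d_3p}.
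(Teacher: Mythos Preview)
Your overall strategy---reduce via Lutwak's containment theorem to an NS-triple of regular triangles, argue that in the extremal configuration each $T_j$ touches a distinct side of the minimal enclosing triangle $T^\ast$, and then optimize subject to the three ``tight'' NS-constraints---matches the paper's approach closely. The genuine gap is your assertion that each NS-condition $\conv(T_i\cup T_k)\cap T_j\neq\emptyset$ reduces to a \emph{linear} inequality in the position parameters.

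It does not. The relevant outer edge of $\conv(T_i\cup T_k)$ joins two points whose coordinates depend affinely on $(p_i,q_i,\tau_i)$ and on $(p_k,q_k,\tau_k)$ respectively, so the intersection of that edge with the line carrying side~$j$ is a \emph{ratio} of affine expressions in the parameters; clearing the denominator yields a genuinely quadratic constraint. Concretely, after normalizing $T^\ast$ to unit side length and writing the side-$j$ footprint of $T_j$ as $(x_j,t_j,y_j)$ with $x_j+t_j+y_j=1$, the equality case (collinearity of a base vertex of one triangle with the apices of the other two) becomes
\[
t_3 x_2 + x_1 y_3 - x_1 x_2 - y_2 y_3 \;=\; 0
\]
and its two cyclic shifts---visibly quadratic in the position variables even with the $t_j$ held fixed. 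Two consequences: first, your ``linearity forces all three constraints tight'' step is unjustified (the paper obtains tightness instead by a perturbation argument, sliding the triangles slightly apart when an intersection has nonempty interior); second, and more seriously, solving the three tight constraints does \emph{not} express $\sigma$ as an explicit symmetric rational function of $(\tau_1,\tau_2,\tau_3)$ amenable to a clean exchange argument. The paper in fact treats all the position and ratio variables simultaneously with Lagrange multipliers and resorts to computer algebra (Maple) to solve the resulting polynomial system, finding five critical points of which only the symmetric one with $t_j=\tfrac{3-\sqrt3}{4}$ realizes the claimed maximum $\mu'=\tfrac23+\tfrac{2}{3\sqrt3}$. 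Your proposal would need either to carry out an equivalent nonlinear analysis or to find a different route around it.
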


\begin{proof}
For simplicity, we set $\mu=\frac{2}{3} +\frac{2}{3\sqrt{3}}$.

First, note that by Example~\ref{ex:counterexample2d_3p}, there is a suitable $3$-element NS-family $\F$ satisfying $\lambda(\F) = \mu$. Thus, we need only to show that for any $3$-element NS-family, we have $\lambda(\F) \leq \mu$. On the other hand, using Lutwak's containment theorem in the same way as in Remark~\ref{reduction-to-simplices} we get that it is sufficient to prove Theorem~\ref{thm:3homothetic} when $\KK$ is a triangle $\TT$. Furthermore, as homothety ratios do not change under affine transformations, we may assume that $\TT$ is a regular triangle. So, let $\F = \{ \TT_1, \TT_2, \TT_3 \}$ such that for $i=1,2,3$, the homothety ratio of $\TT_i$ is $\tau_i$. Let $\overline{\TT}=\mu' (\sum\limits_{i=1}^3 \tau_i) \TT$ be the smallest positive homothetic copy of $\TT$ that contains $\F$. Without loss of generality, we may assume that $\mu' > 1$.

Since $\mu' >1$, it is easy to see that no two of $\TT_1$, $\TT_2$, and $\TT_3$ intersect.
For $i=1,2,3$, let $\pp_i$ and $\qq_i$ denote, respectively, the vertices of $\TT_i$ on $\bd \overline{\TT}$, in counter-clockwise order, and let $\rr_i$ denote the vertex of $\TT_i$ in the interior of $\overline{\TT}$ (cf. Figure~\ref{fig:triangle}). Without loss of generality, we may assume that the closed line segment $[\pp_3,\rr_1]$ intersects $\TT_2$. Since $\mu' > 1$, from this it follows that $[\pp_1,\rr_2]$ intersects $\TT_3$, and that $[\pp_2,\rr_3]$ intersects $\TT_1$. Clearly, if each intersection contains more than one point, then, moving the triangles a little apart we obtain an NS-family of three homothetic copies of $\TT$ that are contained only in a larger homothetic copy of $\TT$. We show that the same holds if at least one intersection contains more than one point. Assume, for example, that $\rr_3$ is contained in the interior of $\conv(\TT_1\cup \TT_2)$. Then, slightly moving $\TT_3$ or $\TT_1$ as well as $\TT_2$ such that $\overline{\TT}$ is fixed, we obtain a configuration where each intersection contains more than one point. Thus, in the following we assume that each of the triplets $\{\pp_1,\rr_3,\rr_2\}$, $\{\pp_2,\rr_1,\rr_3\}$, and $\{\pp_3, \rr_2,\rr_1 \}$ are collinear.

\begin{figure}[h]
\begin{center}
\includegraphics[width=0.28\textwidth]{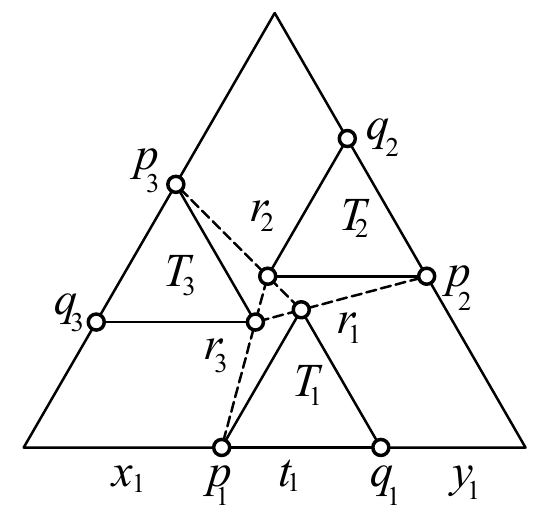}
\caption{An illustration for the proof of Theorem~\ref{thm:3homothetic}}
\label{fig:triangle}
\end{center}
\end{figure}

For $i=1,2,3$ let $\pp_i$ and $\qq_i$ divide the corresponding side of $\overline{\TT}$ into segments of length $x_i$, $t_i$, and $y_i$, in counter-clockwise order, and assume that $x_i+t_i+y_i=1$.
Then the collinearity of the three triplets yields that $t_3 x_2 + x_1y_3 - x_1x_2 -y_2y_3 = t_1 x_3 + x_2y_1 - x_2x_3-y_1y_3 = t_2 x_1 + x_3 y_2 - x_1x_3 - y_1y_2 = 0$.
Now, for the nine variables $x_i, t_i, y_i$, $i=1,2,3$ we have six conditions, and need to determine the minimum of the value of $f(t_1,t_2,t_3) = t_1+ t_2+t_3$ on the interval $(0,1)$.
Eliminating some of the variables, we may apply Lagrange's method with the remaining conditions to obtain the critical points of the expression $t_1+t_2+t_3$. We carried out this computation with a Maple 18.0 mathematical program, which provided five solutions.
(The command list we used can be found in the Appendix of this paper). A more careful analysis showed that two of the solutions were geometrically invalid (i.e., some of the parameters were outside the permitted range $(0,1)$). A third solution corresponded to the value $t_1+t_2+t_3 = 1$, which yields $\mu'=1$. The other two solutions were the following: $x_i = \frac{1}{4}+\frac{1}{4\sqrt{3}}$ and $t_i = \frac{3-\sqrt{3}}{4}$ for $i=1,2,3$, and $x_i = \frac{1}{4}-\frac{1}{4\sqrt{3}}$ and $t_i = \frac{3+\sqrt{3}}{4}$ for $i=1,2,3$.
The values of $\mu' = \frac{1}{t_1+t_2+t_3}$ at these points are $\frac{2}{3}+\frac{2}{3 \sqrt{3}}$ and $\frac{2}{3}-\frac{2}{3 \sqrt{3}}$, respectively, from which the assertion readily follows.
\end{proof}




Now, it is natural to ask the following. 

\begin{prob}\label{Goodman-Goodman-corrected}
Let $\KK$ be a convex body in $\Re^2$. If $\F$ is an NS-family of $k$ positive homothetic copies of $\KK$, with homothety ratios $\tau_1, \tau_2, \ldots , \tau_k$, respectively, and with $k\geq4$ , then prove or disprove that there is a translate of $\left( \frac{2}{3} +\frac{2}{3\sqrt{3}}\right)\left( \sum\limits_{i=1}^k \tau_i \right)\KK$ containing $\F$.
\end{prob}

We note that an NS-family of (large number of) convex bodies does not need to have NS-subfamilies as shown by the following example, which indicates the rather intricate nature of Problem~\ref{Goodman-Goodman-corrected}.

\begin{figure}[h]
\begin{center}
\includegraphics[width=0.25\textwidth]{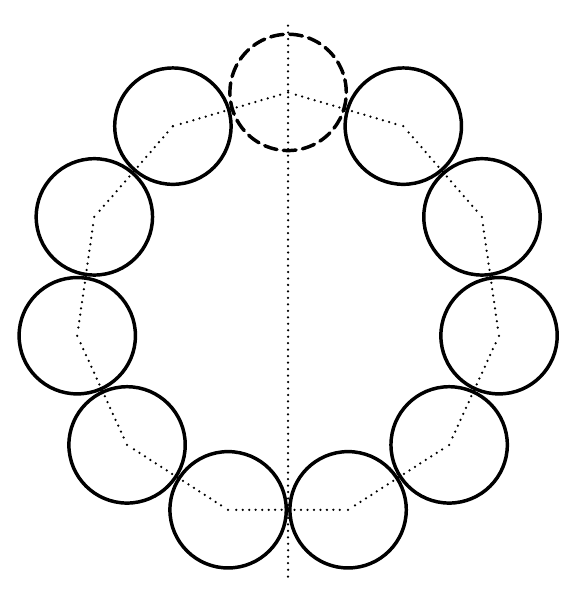}
\caption{An illustration for Example~\ref{ex:nohelly}: removing the dashed circle, the dotted line strictly separates the remaining circles}
\label{fig:nohelly}
\end{center}
\end{figure}  

\begin{example}\label{ex:nohelly}
Let $k\geq1$ and consider a regular $(2k+1)$-gon in $\Re^2$. Place congruent closed circular disks at the vertices of the polygon as centers, and choose their radius such that they are disjoint, but consecutive circular disks `almost' touch (cf. Figure~\ref{fig:nohelly}). Then they form an NS-family of circular disks. On the other hand, removing an arbitrary subfamily of at most $2k-1$ circular disks, the remaining (at least two) circular disks can be strictly separated by a line.
\end{example}

\section{Upper bounding $\lambda(\K)$ in $\Re^d$, $d\ge 2$}

\begin{thm}\label{thm:upperbound}
If $\K = \{ \xx_i + \tau_i \KK: \xx_i\in \Re^d, \tau_i>0, i=1,2,\ldots, n\}$ is an arbitrary NS-family of positive homothetic copies of the convex body $\KK$ in $\Re^d$, then $$\lambda(\K) \leq d$$
holds for all $n\ge 2$ and $d\ge 2$.
\end{thm}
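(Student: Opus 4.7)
The strategy is to convert the non-separability into a width inequality on $\conv(\bigcup\K)$, reduce to the case where $\KK$ is a simplex by Remark~\ref{reduction-to-simplices}, and then verify the containment via a Farkas-type scalar inequality adapted to the simplex's facet structure.

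First, I observe that for any unit vector $u \in \Sph^{d-1}$, each projection of a member of $\K$ onto the line $\Re u$ is an interval of length $\tau_i w_u(\KK)$, and by non-separability these $n$ intervals form a single connected interval on the line. Indeed, any real $c$ in a gap would give a hyperplane $\{x : \langle x, u\rangle = c\}$ disjoint from $\bigcup\K$ strictly separating some members of $\K$ from the others. Thus
\[
w_u\bigl(\conv\textstyle\bigcup\K\bigr) \;\le\; \Bigl(\sum_{i=1}^n \tau_i\Bigr) w_u(\KK) \qquad \forall\, u \in \Sph^{d-1}.
\]
By Remark~\ref{reduction-to-simplices}, it suffices to prove the theorem when $\KK = \Sigma$ is a $d$-simplex.

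With $\KK = \Sigma$, let $u_0, \ldots, u_d$ denote the outward unit facet-normals of $\Sigma$ and $\alpha_0, \ldots, \alpha_d > 0$ the unique (up to scale) positive coefficients with $\sum_j \alpha_j u_j = \mathbf{0}$. Writing $\Sigma_0 := \bigl(\sum_i \tau_i\bigr)\Sigma$ and $C := \conv(\bigcup\K)$, the desired inclusion $C \subseteq t + d\Sigma_0$ is equivalent, by Farkas' lemma on this unique positive relation, to the single scalar inequality
\[
\sum_{j=0}^d \alpha_j\, h_C(u_j) \;\le\; d \sum_{j=0}^d \alpha_j\, h_{\Sigma_0}(u_j).
\]
I would then attempt to verify this by summing the width bound against the weights $\alpha_j$, using the simplex identity $h_{\Sigma_0}(-u_j) = \frac{1}{\alpha_j}\sum_{k \ne j}\alpha_k h_{\Sigma_0}(u_k)$ (an equality because the $u_j$ are the \emph{actual} facet-normals of $\Sigma_0$) together with the subadditivity inequality $h_C(-u_j) \le \frac{1}{\alpha_j}\sum_{k \ne j}\alpha_k h_C(u_k)$ for the general convex body $C$.

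The main obstacle is sharpening the constant from $d+1$ down to $d$. A direct combination of the width bound with the difference-body inclusion $\Sigma - \Sigma \subseteq (d+1)\Sigma - (d+1)c_\Sigma$ from Minkowski's centroid theorem (an equality for simplices) yields $C - C \subseteq \bigl(\sum \tau_i\bigr)(\Sigma - \Sigma) \subseteq (d+1)\Sigma_0$ up to translation, and thus only $\lambda(\K) \le d+1$. Obtaining the extra unit will require a finer argument than mere width-comparison: for example, an extremal argument on the smallest enclosing homothet in which $d+1$ of the $\K_i$ must touch the boundary in positively spanning normal directions, or a careful re-use of the non-separability (not just through the width bound in individual directions) inside the Farkas combination. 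I expect this to be the most delicate part of the proof.
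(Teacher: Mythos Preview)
Your width observation is correct and is exactly the paper's Lemma~\ref{lem:width}. The gap is that you stop there: you explicitly concede that your Farkas/difference-body route only yields $\lambda(\K)\le d+1$ and that getting down to $d$ ``will require a finer argument''. That finer argument is precisely what the paper supplies, and it is short --- no reduction to simplices and no Farkas inequality are needed.

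Here is the missing idea. Let $M=\conv\bigcup\K$ and write $Q_0:=\tfrac12(Q-Q)$ for the central symmetrization of a convex body $Q$. Since symmetrization preserves every width, your width inequality in \emph{all} directions gives
\[
M_0\ \subseteq\ \Bigl(\sum_i\tau_i\Bigr)\KK_0 .
\]
Now use the standard two--sided approximation (the paper's Lemma~\ref{lem:approximation}): for any convex body $Q$ there are translates with
\[
\tfrac{2}{d+1}\,Q_0\ \subseteq\ Q\ \subseteq\ \tfrac{2d}{d+1}\,Q_0 .
\]
Apply the right inclusion to $M$ and the left inclusion to $\KK$:
\[
M\ \subseteq\ \tfrac{2d}{d+1}\,M_0\ \subseteq\ \tfrac{2d}{d+1}\Bigl(\sum_i\tau_i\Bigr)\KK_0\ \subseteq\ \tfrac{2d}{d+1}\Bigl(\sum_i\tau_i\Bigr)\cdot\tfrac{d+1}{2}\,\KK\ =\ d\Bigl(\sum_i\tau_i\Bigr)\KK
\]
(all up to translation). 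Your $d+1$ came from using only one side of this approximation, effectively bounding $M\subseteq M-M\subseteq(d+1)\Sigma_0$; the point is that the loss of $\tfrac{2d}{d+1}$ incurred when passing from $M$ to $M_0$ is exactly compensated by the gain of $\tfrac{d+1}{2}$ when passing back from $\KK_0$ to $\KK$. This is the whole proof --- the detour through Lutwak's theorem, facet normals and the positive relation $\sum\alpha_j u_j=0$ is unnecessary here.
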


For the proof of Theorem~\ref{thm:upperbound} we need the following simple observations.
For the first statement recall that, if ${\mathbf u} \in \Sph^{d-1}=\{\xx\in\Re^d\ |\ \|\xx\|=1\}$ and $\mathbf{Q}$ is a convex body in $\Re^d$, then the width of $\mathbf{Q}$ in the direction $\mathbf{u}$ is labelled by ${\rm width}_{\mathbf u}(\mathbf{Q})$ and it is equal to $\max \{\langle \xx,\mathbf{u}\rangle\ |\ \xx\in\mathbf{Q}\}-\min \{\langle \xx,\mathbf{u}\rangle\ |\ \xx\in\mathbf{Q}\}$. 

\begin{lem}\label{lem:width}
Let $\K = \{ \xx_i + \tau_i \KK: \xx_i\in \Re^d, \tau_i>0, i=1,2,\ldots, n\}$ be an arbitrary NS-family of positive homothetic copies of the convex body $\KK$ in $\Re^d, d\ge 2$. Then for any ${\mathbf u} \in \Sph^{d-1}$ we have ${\rm width}_{\mathbf u} \left( \conv \bigcup \K \right) \leq    \left(\sum\limits_{i=1}^n \tau_i\right)  {\rm width}_{\mathbf u}(\KK) $.
\end{lem}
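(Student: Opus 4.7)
The plan is to project the whole configuration onto the line spanned by $\uu$ and reduce the bound to a one-dimensional statement about intervals, where the non-separability assumption has a particularly clean consequence.

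First I would set $\pi\colon \Re^d\to \Re$, $\pi(\xx)=\langle \xx,\uu\rangle$, and $I_i=\pi(\xx_i+\tau_i\KK)$. Since each $\xx_i+\tau_i\KK$ is compact and convex, each $I_i$ is a closed interval, and its length equals $\tau_i\cdot {\rm width}_{\uu}(\KK)$. Moreover, $\pi\bigl(\conv\bigcup\K\bigr)$ is a closed interval whose length equals ${\rm width}_{\uu}\bigl(\conv\bigcup\K\bigr)$, and clearly this interval coincides with $\conv\bigl(\bigcup_{i=1}^n I_i\bigr)$.

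The crux of the argument is to show that $\bigcup_{i=1}^n I_i$ is itself a single closed interval, with no gaps. Suppose on the contrary that there exists $\lambda^*\in \conv\bigl(\bigcup_i I_i\bigr)\setminus\bigcup_i I_i$. Then the affine hyperplane $H=\{\xx\in\Re^d:\langle \xx,\uu\rangle=\lambda^*\}$ is disjoint from $\bigcup\K$. By the choice of $\lambda^*$, the family $\K$ splits into the non-empty subfamilies $\K^-=\{\xx_i+\tau_i\KK:\sup I_i<\lambda^*\}$ and $\K^+=\{\xx_i+\tau_i\KK:\inf I_i>\lambda^*\}$, which together exhaust $\K$ and which are strictly separated by $H$. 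This contradicts the NS-property, so $\bigcup_i I_i$ must be a closed interval.

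Once this is established, the lemma follows from an elementary length computation: the length of the connected union $\bigcup_i I_i$ is at most the sum of the lengths $|I_i|$ (finite subadditivity of one-dimensional Lebesgue measure), so
\[
{\rm width}_{\uu}\!\left(\conv\bigcup\K\right)=\Bigl|\bigcup_{i=1}^n I_i\Bigr|\le \sum_{i=1}^n |I_i|=\Bigl(\sum_{i=1}^n\tau_i\Bigr){\rm width}_{\uu}(\KK).
\]
I expect no serious obstacle; the only point that requires care is the clean translation of non-separability in $\Re^d$ into the ``no-gap'' statement for the projected intervals, in particular verifying that both sides of the separating hyperplane are non-empty, which is guaranteed exactly because $\lambda^*$ lies in the convex hull of the $I_i$ but in none of them.
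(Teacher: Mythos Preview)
Your argument is correct and is essentially the same as the paper's: both project onto the line in direction $\uu$, use the NS-property to conclude that the projected intervals cover the projection of $\conv\bigcup\K$ (you spell out the ``no-gap'' contradiction explicitly, the paper just asserts it), and then apply subadditivity of length. Your version is simply a more detailed rendering of the paper's two-sentence proof.
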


\begin{proof}
As $\K$ is an NS-family therefore the orthogonal projection of $\conv \bigcup \K$ onto a line $L$ parallel to $\mathbf{u}$ is a line segment of length 
${\rm width}_{\mathbf u} \left( \conv \bigcup \K \right)$ which must be covered by the orthogonal projections of the convex bodies  $\xx_i + \tau_i \KK, i=1,2,\ldots, n$ onto $L$ having lengths equal to ${\rm width}_{\mathbf u}(\xx_i + \tau_i \KK)=\tau_i{\rm width}_{\mathbf u}(\KK), i=1,2,\ldots, n$. This yields the assertion.
\end{proof}

The following statement is well known. For the convenience of the reader we include its simple proof.

\begin{lem}\label{lem:approximation}
For any convex body $\mathbf Q$ in $\Re^d$, there exist $\xx, \yy \in \Re^d$ such that $\xx+\frac{2}{d+1} {\mathbf Q}_{0}\subseteq{\mathbf Q} \subseteq \yy+\frac{2d}{d+1} {\mathbf Q}_{0}$, where ${\mathbf Q}_0 = \frac{1}{2}({\mathbf Q}-{\mathbf Q})$ is the central symmetrization of ${\mathbf Q}$ with respect to the origin $\mathbf o$ in $\E^d$.
\end{lem}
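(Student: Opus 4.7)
The plan is to choose both $\xx$ and $\yy$ in the conclusion to equal the centroid $\mathbf{c}$ of $\mathbf{Q}$, and to deduce the two inclusions from Minkowski's classical inequality on the centroid, which states that for any convex body $\mathbf{Q}$ in $\Re^d$ with centroid $\mathbf{c}$,
\[
\mathbf{c}+\frac{1}{d}(\mathbf{c}-\mathbf{Q})\subseteq \mathbf{Q}.
\]
Equivalently, writing $\mathbf{Q}'=\mathbf{Q}-\mathbf{c}$, its support function satisfies $h_{\mathbf{Q}'}(-\uu)\geq \frac{1}{d}\,h_{\mathbf{Q}'}(\uu)$ for every direction $\uu\in\Sph^{d-1}$. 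I would cite this as a well-known fact (its short proof, via Brunn's slicing theorem or a direct centroid moment calculation, is classical).

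For the outer inclusion $\mathbf{Q}\subseteq \mathbf{c}+\frac{2d}{d+1}\mathbf{Q}_0$, I would pass to support functions. Since $h_{\mathbf{Q}_0}(\uu)=\frac{1}{2}\bigl(h_{\mathbf{Q}'}(\uu)+h_{\mathbf{Q}'}(-\uu)\bigr)$, combining with Minkowski's inequality yields
\[
h_{\mathbf{Q}_0}(\uu)\geq \frac{1}{2}\!\left(1+\frac{1}{d}\right)h_{\mathbf{Q}'}(\uu)=\frac{d+1}{2d}\,h_{\mathbf{Q}'}(\uu),
\]
so $h_{\mathbf{Q}'}(\uu)\leq \frac{2d}{d+1}\,h_{\mathbf{Q}_0}(\uu)$ uniformly in $\uu$, which is exactly $\mathbf{Q}-\mathbf{c}\subseteq \frac{2d}{d+1}\mathbf{Q}_0$.

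For the inner inclusion $\mathbf{c}+\frac{2}{d+1}\mathbf{Q}_0\subseteq \mathbf{Q}$, I would form a Minkowski convex combination of the trivial inclusion $\mathbf{Q}\subseteq \mathbf{Q}$ and Minkowski's inclusion $\mathbf{c}+\frac{1}{d}(\mathbf{c}-\mathbf{Q})\subseteq \mathbf{Q}$, with weights $\frac{1}{d+1}$ and $\frac{d}{d+1}$, respectively. Since $\mathbf{Q}$ is convex, $\lambda\mathbf{Q}+(1-\lambda)\mathbf{Q}=\mathbf{Q}$, so the resulting Minkowski sum again lies in $\mathbf{Q}$, and a short algebraic simplification gives
\[
\frac{1}{d+1}\,\mathbf{Q}+\frac{d}{d+1}\!\left(\mathbf{c}+\frac{1}{d}(\mathbf{c}-\mathbf{Q})\right)=\mathbf{c}+\frac{1}{d+1}(\mathbf{Q}-\mathbf{Q})=\mathbf{c}+\frac{2}{d+1}\mathbf{Q}_0,
\]
which is the desired inclusion.

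There is no real obstacle once Minkowski's centroid inequality is granted. The only subtle point is recalling that taking a Minkowski convex combination of two subsets of the convex body $\mathbf{Q}$ lands back inside $\mathbf{Q}$; this is precisely what converts the trivial inclusion together with Minkowski's inclusion—neither of which individually contains $\mathbf{c}+\frac{2}{d+1}\mathbf{Q}_0$—into the sharp statement of the inner inclusion in a single line.
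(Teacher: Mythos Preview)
Your proof is correct. Both inclusions follow cleanly from Minkowski's centroid inequality, and your Minkowski convex combination trick for the inner inclusion is a nice touch.

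The paper reaches the same conclusion by a different (though closely related) route. Instead of the centroid of $\mathbf{Q}$, it translates so that the origin is the centroid of a \emph{largest-volume simplex inscribed in} $\mathbf{Q}$; the maximality then forces each vertex of the simplex to lie on a supporting hyperplane of $\mathbf{Q}$ parallel to the opposite facet, from which one reads off $\mathbf{Q}\subseteq -d\mathbf{Q}$ directly. From that single inclusion the paper gets both statements by pure Minkowski-sum algebra: adding $d\mathbf{Q}$ to both sides of $\mathbf{Q}\subseteq -d\mathbf{Q}$ gives $(d+1)\mathbf{Q}\subseteq 2d\mathbf{Q}_0$, while adding $\mathbf{Q}$ to both sides of $-\mathbf{Q}\subseteq d\mathbf{Q}$ gives $2\mathbf{Q}_0\subseteq (d+1)\mathbf{Q}$. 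So the underlying inequality is the same ($-\mathbf{Q}'\subseteq d\mathbf{Q}'$ for a suitable translate), but the paper extracts it from the inscribed-simplex argument rather than from the centroid inequality, and then avoids support functions entirely. Your approach has the advantage that $\xx=\yy$ is an explicit, intrinsic point of $\mathbf{Q}$; the paper's approach is perhaps more self-contained since the inscribed-simplex fact is a one-line geometric observation.
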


\begin{proof}
Without loss of generality, let $\mathbf o$ be the center of a largest volume simplex inscribed in $\mathbf Q$. Then, clearly, ${\mathbf Q} \subseteq -d{\mathbf Q}$. From this it follows that $(d+1){\mathbf Q} \subseteq d({\mathbf Q}-{\mathbf Q})=2d{\mathbf Q}_0$. Thus, ${\mathbf Q} \subseteq \frac{2d}{d+1} {\mathbf Q}_0$. On the other hand, as $-{\mathbf Q} \subseteq d{\mathbf Q}$ therefore $2{\mathbf Q}_0={\mathbf Q}-{\mathbf Q} \subseteq (d+1){\mathbf Q}$. Hence, $\frac{2}{d+1} {\mathbf Q}_{0}\subseteq{\mathbf Q}$.
\end{proof}

\begin{proof}[Proof of Theorem~\ref{thm:upperbound}]
Let ${\mathbf M}=\conv \bigcup \K$. By Lemma~\ref{lem:width} we get that ${\rm width}_{\mathbf u} ( {\mathbf M}) \leq  \left(\sum\limits_{i=1}^n \tau_i\right)   {\rm width}_{\mathbf u}(\KK ) $ holds for every  ${\mathbf u} \in \Sph^{d-1}$. As  the central symmetrization does not change the width of a convex body in any direction therefore ${\mathbf M}_0\subseteq \left(\sum\limits_{i=1}^n \tau_i\right)\KK_{0}$. Finally, by Lemma~\ref{lem:approximation} there exist ${\mathbf m}, {\mathbf k} \in \Re^d$ such that 
$${\mathbf M} \subseteq {\mathbf m}+\frac{2d}{d+1} {\mathbf M}_{0} \subseteq {\mathbf m}+\frac{2d}{d+1} \left(\sum\limits_{i=1}^n \tau_i\right)\KK_{0} \subseteq {\mathbf m}+\frac{2d}{d+1} \left(\sum\limits_{i=1}^n \tau_i\right)\left(\frac{d+1}{2}({\mathbf k}+\KK) \right)$$
$$= \left({\mathbf m}+d\left(\sum\limits_{i=1}^n \tau_i\right){\mathbf k} \right) +d\left(\sum\limits_{i=1}^n \tau_i\right)\KK. $$\end{proof}

\begin{rem}
From the above proof of Theorem~\ref{thm:upperbound} it follows that if $\KK$ is an $\oo$-symmetric convex body in $\Re^d$, i.e., $\KK=\KK_{0}$, then $\lambda(\K) \leq \frac{2d}{d+1}$. In the next section, using a completely different approach, we give an optimal improvement on this inequality.
\end{rem}

\section{A proof of Conjecture~\ref{Goodman-Goodman-1945} for centrally symmetric convex bodies in $\Re^d$, $d\geq2$}

In this section we extend Theorem~\ref{Goodman-Goodman-theorem} to finite dimensional normed spaces by proving Conjecture~\ref{Goodman-Goodman-1945} for all centrally symmetric convex bodies $\KK$ in $\Re^d$, $d\ge 2$.

\begin{thm}\label{centrally symmetric convex bodies}
For every $\oo$-symmetric convex body $\KK_0$ and every NS-family $\K = \{ \xx_i + \tau_i \KK_0\ |\  \xx_i\in \Re^d, \tau_i>0, i=1,2,\ldots, n\}$ the inequality $\lambda(\K) \leq 1$ holds for all $d\ge 2$ and $n\ge 2$.
\end{thm}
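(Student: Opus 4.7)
The plan is to show that the radius-weighted centroid $\mathbf c = \frac{1}{R}\sum_{i=1}^n \tau_i \xx_i$, with $R=\sum_{i=1}^n\tau_i$, serves as a valid center, i.e.\ $\bigcup \K \subseteq \mathbf c + R\KK_0$. Working with support functions and using that $h_{\KK_0}(-\mathbf u)=h_{\KK_0}(\mathbf u)$ by central symmetry, this reduces to the two-sided estimate
\[
h_{\bigcup\K}(\mathbf u) - R\,h_{\KK_0}(\mathbf u) \;\le\; \langle \mathbf c,\mathbf u\rangle \;\le\; R\,h_{\KK_0}(\mathbf u) - h_{\bigcup\K}(-\mathbf u)
\]
for every $\mathbf u\in\Re^d$; by positive homogeneity it suffices to check these inequalities on the dual unit sphere $\{\mathbf u:h_{\KK_0}(\mathbf u)=1\}$.

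Fix such a $\mathbf u$ and project the family onto a line parallel to $\mathbf u$. Each $\xx_i+\tau_i\KK_0$ projects to the interval $I_i=[\alpha_i-\tau_i,\,\alpha_i+\tau_i]$ with $\alpha_i=\langle \xx_i,\mathbf u\rangle$, and the argument behind Lemma~\ref{lem:width} shows that the projected family inherits the NS property in the one-dimensional sense, so $\bigcup_i I_i$ is a single interval $[m,M]$ with $M-m\le 2R$. The theorem then reduces to the following one-dimensional key claim: for any 1-D NS-family of intervals with total radius $R$, the radius-weighted mean $\bar\alpha=\frac{1}{R}\sum_i\tau_i\alpha_i$ lies in $[M-R,\,m+R]$. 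Granting this, $\langle \mathbf c,\mathbf u\rangle = \bar\alpha$ verifies the required bound in direction $\mathbf u$, and ranging over all $\mathbf u$ on the dual unit sphere yields $\bigcup \K \subseteq \mathbf c + R\KK_0$.

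To establish the one-dimensional claim, sort the intervals by left endpoint, $L_1\le L_2\le\cdots\le L_n$ with $L_i=\alpha_i-\tau_i$, set $T_k=\tau_1+\cdots+\tau_k$, and show by induction on $i$ that $L_i-L_1\le 2T_{i-1}$. The inductive step invokes NS: if $L_i>\max_{j<i}(L_j+2\tau_j)$, then the point $x=L_i-\varepsilon$ would separate $\{I_j\}_{j<i}$ from $\{I_j\}_{j\ge i}$ while avoiding $\bigcup_j I_j$, contradicting NS; combined with the inductive hypothesis this gives $L_i\le L_1+2T_{i-1}$. Hence $\alpha_i-m\le T_{i-1}+T_i$, and by the telescoping identity $\sum_i\tau_i(T_{i-1}+T_i)=\sum_i(T_i^2-T_{i-1}^2)=R^2$,
\[
\sum_{i=1}^n \tau_i(\alpha_i-m) \;\le\; R^2,
\]
so $\bar\alpha\le m+R$; the bound $\bar\alpha\ge M-R$ follows by reversing orientation. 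I expect the main obstacle to be this 1-D induction: NS does not force consecutively placed intervals in any natural ordering to overlap — a single wide interval may bridge several small gaps — so the argument has to pass through the running maximum $\max_{j<i}R_j$ and rely specifically on the left-endpoint sorting, after which the telescoping bound together with the $R^2$ identity closes everything cleanly.
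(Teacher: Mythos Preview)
Your proposal is correct and follows essentially the same route as the paper: the paper also takes the radius-weighted centroid $\xx=\frac{1}{R}\sum_i\tau_i\xx_i$ as center, reduces via (orthogonal) projection onto lines to the one-dimensional Lemma~\ref{lem:GG}, and proves that lemma by ordering the intervals by left endpoint, deriving $x_i\le 2\tau_1+\cdots+2\tau_{i-1}+\tau_i$ (your $L_i-L_1\le 2T_{i-1}$), and summing via the identity $\sum_i(2\tau_1+\cdots+2\tau_{i-1}+\tau_i)\tau_i=(\sum_i\tau_i)^2$, which is exactly your telescoping $\sum_i\tau_i(T_{i-1}+T_i)=R^2$. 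The only cosmetic difference is that you normalize on the dual sphere $\{h_{\KK_0}=1\}$ whereas the paper works with orthogonal projection onto a line and invokes support functions at the end; the content is identical.
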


Our proof is a close variant of the proof of Theorem 1 by Goodman-Goodman in \cite{GG} and it is
based on the following lemma, which is a strengthened version of Lemma 2 of \cite{GG}.

\begin{lem}\label{lem:GG}
Let $\F = \left\{ [x_i-\tau_i, x_i + \tau_i ] : \tau_i > 0, i=1,2,\ldots, n\right\}$ be a family of closed intervals in $\Re$ such that $\bigcup \F$ is  single closed interval in $\Re$. Let $x = \frac{\sum_{i=1}^n \tau_i x_i}{\sum_{i=1}^n \tau_i}$. Then the interval $\left[ x - \sum_{i=1}^n \tau_i , x + \sum_{i=1}^n \tau_i \right]$ covers $\bigcup \F$.
\end{lem}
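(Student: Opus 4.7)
The plan is to prove Lemma~\ref{lem:GG} by induction on $n = |\F|$, via a merging operation that replaces two overlapping intervals by one, while preserving both the total radius $T = \sum_{i=1}^n \tau_i$ and the weighted center $x = \frac{\sum_i \tau_i x_i}{T}$.

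The base case $n=1$ is immediate, since $[x-T, x+T]$ then equals the unique member of $\F$. For the inductive step with $n \geq 2$, I first observe that since $\bigcup \F$ is a single closed interval, at least two members $I_i = [x_i - \tau_i, x_i + \tau_i]$ and $I_j$ of $\F$ must have non-empty intersection; otherwise $\bigcup \F$ would decompose into disjoint closed pieces and be disconnected. In particular $|x_i - x_j| \leq \tau_i + \tau_j$. I would then replace $I_i$ and $I_j$ by the single interval $J = [y - \sigma, y + \sigma]$ with $\sigma = \tau_i + \tau_j$ and $y = (\tau_i x_i + \tau_j x_j)/\sigma$, producing a new family $\F'$ of $n-1$ intervals.

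The crux of the argument is the two-element subcase: $J \supseteq I_i \cup I_j$. By symmetry this reduces to verifying the single inequality $y + \sigma \geq x_i + \tau_i$, which upon substitution simplifies to $\tau_j(x_j - x_i + \sigma)/\sigma \geq 0$, and this is immediate from $|x_i - x_j| \leq \sigma$. Given that $J$ contains $I_i \cup I_j$, it is routine to check the remaining inductive bookkeeping for $\F'$: the total radius is still $T$, the weighted center is still $x$ (since $\sigma y = \tau_i x_i + \tau_j x_j$), and $\bigcup \F' = J \cup \bigcup_{k \neq i,j} I_k$ is still a single closed interval (because $J$ is itself an interval and overlaps the original union $\bigcup \F$, which was already a single interval). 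The inductive hypothesis applied to $\F'$ then yields $\bigcup \F \subseteq \bigcup \F' \subseteq [x-T, x+T]$.

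I expect the only substantive step to be the short algebraic verification in the two-interval subcase, where the overlap condition enters in a sharp way; everything else is inductive bookkeeping.
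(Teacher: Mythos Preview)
Your inductive merging argument is correct and complete. The two-interval containment $J \supseteq I_i \cup I_j$ is exactly the place where the overlap hypothesis $|x_i - x_j| \leq \tau_i + \tau_j$ is consumed, and your verification of it is right; the check that $\bigcup \F'$ remains a single interval is also fine once one notes that $\bigcup \F' = J \cup \bigcup \F$.

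This is, however, a genuinely different argument from the paper's. The paper proceeds directly rather than by induction: it normalizes so that the left endpoint of $\bigcup \F$ is $0$, orders the intervals by their left endpoints, and observes that connectedness forces $x_i \leq 2\tau_1 + \cdots + 2\tau_{i-1} + \tau_i$. Multiplying by $\tau_i$ and summing gives
\[
\sum_i \tau_i x_i \;\leq\; \sum_i \tau_i(2\tau_1+\cdots+2\tau_{i-1}+\tau_i) \;=\; \Bigl(\sum_i \tau_i\Bigr)^2,
\]
so $x \leq T$ and the left endpoint is covered; the right endpoint follows by symmetry. The paper's approach is a single-pass computation exploiting a neat algebraic identity, while yours isolates the essential content in the $n=2$ case and propagates it by a merge that preserves both invariants $T$ and $x$. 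Your route makes the role of the weighted center more transparent (it is manifestly preserved under merging), whereas the paper's route avoids any recursion and is marginally shorter. Either is perfectly adequate here.
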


\begin{proof}
We prove that the left end point of $\bigcup \F$ is covered by the interval $\left[ x - \sum_{i=1}^n \tau_i , x + \sum_{i=1}^n \tau_i \right]$; to prove that the right end point is covered, we may apply a similar argument.

Without loss of generality, we may assume that the left end point of the interval $\bigcup_{i=1}^n [x_i-\tau_i,x_i+\tau_i]$ is the origin $\oo$, moreover, that the numbering of the intervals $[x_i-\tau_i,x_i+\tau_i]$, $i=1,2,\ldots, n$ is in the order in which the left end points of these intervals occur on $\Re$ moving from left to right (in increasing order).
Based on this it is clear that
$$x_i\leq 2\tau_1+\ldots+2\tau_{i-1}+\tau_i,$$ where, for convenience we set $\tau_0 = 0$, and therefore
$$x=\frac{\sum_{i=1}^n x_i \tau_i}{\sum_{i=1}^n \tau_i}\leq \frac{\sum_{i=1}^n ( 2\tau_1+\ldots+2\tau_{i-1}+\tau_i ) \tau_i}{\sum_{i=1}^n \tau_i}=\frac{(\sum_{i=1}^n \tau_i  )^2}{\sum_{i=1}^n \tau_i}= \sum_{i=1}^n \tau_i, $$
showing that $\oo$ is indeed covered by the interval $\left[ x-\sum_{i=1}^n \tau_i, x+\sum_{i=1}^n \tau_i \right]$.
\end{proof}

\begin{proof}[Proof of Theorem~\ref{centrally symmetric convex bodies}]
Let $\xx = \frac{\sum_{i=1}^n \tau_i \xx_i}{\sum_{i=1}^n \tau_i}$, and set $\KK' = \xx + \left( \sum_{i=1}^n \tau_i \right) \KK_0$.
We prove that $\KK'$ covers $\bigcup \K$.

For any line $L$ through the origin $\oo$, let $\proj_L : \Re^d \to L$ denote the orthogonal projection onto $L$, and let $h_{\K} : \Sph^{d-1} \to \Re$ and $h_{\KK'} : \Sph^{d-1} \to \Re$ denote the support functions of $\conv\left( \bigcup \K \right)$ and $\KK'$, respectively.
Then $\proj_L \left( \bigcup \K \right)$ is a single interval, which, by Lemma~\ref{lem:GG}, is covered by $\proj_L \left( \KK' \right)$.
Thus, for any $\uu \in \Sph^{d-1}$, we have that $h_{\K}(\uu) \leq h_{\KK'}(\uu)$, which readily implies that $\bigcup \K \subseteq \KK'$. 
\end{proof}

\begin{rem}
If the positive homothetic convex bodies of $\K = \{ \xx_i + \tau_i \KK_0\ |\  \xx_i\in \Re^d, \tau_i>0, i=1,2,\ldots, n\}$ have pairwise disjoint interiors with their centers $\{\xx_i\ |\  i=1,2,\ldots, n\}$ lying on a line $L$ in $\Re^d$ such that the consecutive elements of $\K$ along $L$ touch each other, then $\K$ is an NS-family with $\lambda(\K) =1$.
\end{rem}

\section{A proof of Conjecture~\ref{Goodman-Goodman-1945} for $k$-impass\-ab\-le families in $\Re^d$ whenever $0\leq k\leq d-2$}

In this section we prove Conjecture~\ref{Goodman-Goodman-1945} for the following natural subfamilies of the family of NS-families, which following G. Fejes T\'oth and W. Kuperberg \cite{FeKu} we call {\it $k$-impassable families} in short, {\it $k$-IP-families} of (positive homothetic) convex bodies in $\Re^d$, where $0\le k\le d-2$.

\begin{defn}
Let $\K = \{ \xx_i + \tau_i \KK: \xx_i\in \Re^d, \tau_i>0, i=1,2,\ldots, n\}$ be a family of positive homothetic copies of the convex body $\KK$ in $\Re^d$ and let $0\le k\le d-1$. We say that $\K$ is a \emph{$k$-impassable arrangement}, in short, a \emph{$k$-IP-family} if every $k$-dimensional affine subspace of $\Re^d$ intersecting $\conv \left( \bigcup \K \right)$ intersects an element of $\K$. Let $\lambda_k(\K) > 0$ denote the smallest positive value $\lambda$ such that some translate of $\lambda \left( \sum\limits_{i=1}^n \tau_i \right) \KK$ covers $\bigcup \K$, where $\K$ is $k$-IP-family. A \emph{$(d-1)$-IP-family} is simply called an \emph{NS-family} and in that case $\lambda_{d-1}(\K)=\lambda(\K)$.   
\end{defn}

In order to state our theorem on $k$-IP-families of (positive homothetic) convex bodies in $\Re^d$ we need the following definitions and statement from Section 3.2 of \cite{Sch}.

\begin{defn}\label{defn:summand}
Let $\KK_1'$ and $\KK_2'$ be convex bodies in $\Re^d, d\ge 2$. We say that $\KK_2'$ is a \emph{summand} of $\KK_1'$ if there exists a convex body $\KK'$ in $\Re^d$ such that $\KK_2'+\KK'=\KK_1'$.
\end{defn}

\begin{defn}\label{defn:slidesfreely}
Let $\KK_1'$ and $\KK_2'$ be convex bodies in $\Re^d, d\ge 2$. We say that $\KK_2'$ \emph{slides freely} inside $\KK_1'$ if to each boundary point $\xx$ of $\KK_1'$ there exists a translation vector $\yy\in\Re^d$ such that $\xx\in\yy+\KK_2'\subseteq\KK_1'$.
\end{defn}

\begin{lem}\label{summand-1}
Let $\KK_1'$ and $\KK_2'$ be convex bodies in $\Re^d, d\ge 2$. Then $\KK_2'$ is a summand of $\KK_1'$ if and only if $\KK_2'$ slides freely inside $\KK_1'$.
\end{lem}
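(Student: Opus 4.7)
My plan is to prove the two implications separately, with the forward direction handled by a direct construction and the converse via support functions.

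For the forward direction (summand $\Rightarrow$ slides freely), I would start from $\KK_1' = \KK_2' + \KK'$, fix an arbitrary $\xx \in \bd \KK_1'$, and decompose $\xx = \mathbf{k}_2 + \mathbf{k}'$ with $\mathbf{k}_2 \in \KK_2'$, $\mathbf{k}' \in \KK'$. Setting $\yy := \mathbf{k}'$ then gives $\xx \in \yy + \KK_2'$ and $\yy + \KK_2' \subseteq \KK' + \KK_2' = \KK_1'$, as required.

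For the converse, I would take the natural candidate
$$\KK' := \{\yy \in \Re^d : \yy + \KK_2' \subseteq \KK_1'\}.$$
This set is closed and bounded (being contained in $\KK_1' - \KK_2'$) and convex (an intersection of the half-spaces $\{\yy : \langle \yy, \uu \rangle \le h_{\KK_1'}(\uu) - h_{\KK_2'}(\uu)\}$ over $\uu \in \Sph^{d-1}$, where $h_\mathbf{Q}$ denotes the support function of $\mathbf{Q}$). The inclusion $\KK' + \KK_2' \subseteq \KK_1'$ is automatic from the definition, and for the reverse inclusion I would compare support functions: for each $\uu \in \Sph^{d-1}$, pick a boundary point $\xx_\uu$ of $\KK_1'$ attaining $h_{\KK_1'}(\uu)$ and use the sliding-freely hypothesis to produce $\yy_\uu \in \KK'$ with $\xx_\uu \in \yy_\uu + \KK_2'$, yielding
$$h_{\KK_1'}(\uu) = \langle \xx_\uu, \uu \rangle \le h_{\KK'}(\uu) + h_{\KK_2'}(\uu) = h_{\KK' + \KK_2'}(\uu).$$
Combined with the reverse inequality coming from $\KK' + \KK_2' \subseteq \KK_1'$, this gives $h_{\KK_1'} = h_{\KK' + \KK_2'}$ on $\Sph^{d-1}$, and hence $\KK_1' = \KK' + \KK_2'$, since a convex body is determined by its support function.

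The main subtlety I expect is that the candidate $\KK'$ need not have non-empty interior (for instance, with $\KK_1' = [0,2] \times [0,1]$ and $\KK_2' = [0,1] \times [0,1]$ in $\Re^2$ one obtains $\KK' = [0,1] \times \{0\}$). Thus the lemma's statement must be understood either with $\KK'$ allowed to be a lower-dimensional compact convex set, or by imposing additional mild conditions guaranteeing that one can extract $d+1$ affinely independent translates $\yy_{\uu_1}, \ldots, \yy_{\uu_{d+1}}$; under either reading, the support-function reasoning above delivers the required decomposition.
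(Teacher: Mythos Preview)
Your argument is correct and is essentially the standard proof (via the Minkowski difference $\KK' = \KK_1' \ominus \KK_2'$ and comparison of support functions) that appears in Schneider's book. The paper itself does not supply a proof of this lemma: it is quoted verbatim as a known statement from Section~3.2 of \cite{Sch}, so there is no ``paper's own proof'' to compare against.

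Your caveat about $\KK'$ possibly being lower-dimensional is well observed given this paper's convention that a convex body has non-empty interior; in Schneider's book, however, a \emph{convex body} is any non-empty compact convex set, so the issue disappears at the source. For the paper's application (in the proof of Theorem~\ref{thm:smallk}, where one only needs that ``summand'' implies ``slides freely'' and conversely passes through Lemma~3.2.6 of \cite{Sch}), this terminological mismatch causes no trouble.
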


We prove Conjecture~\ref{Goodman-Goodman-1945} for $k$-IP-families of (positive homothetic) convex bodies in $\Re^d$ for all $0 \leq k \leq d-2$ in the following strong sense.

\begin{thm}\label{thm:smallk}
Let $\KK$ be a $d$-dimensional convex body and $\K=\{ \KK_i = \xx_i + \tau_i \KK: \xx_i\in \Re^d, \tau_i>0, i=1,2,\ldots, n\}$
be a $k$-IP family of positive homothetic copies of $\KK$ in $\Re^d$, where $0 \leq k \leq d-2$. Then $\conv\bigcup \K$ slides freely in $\left( \sum_{i=1}^n \tau_i \right) \KK$ (i.e., $\conv\bigcup \K$ is a summand of $\left( \sum_{i=1}^n \tau_i \right) \KK$) and therefore $\lambda_k(\K) \leq 1$.
\end{thm}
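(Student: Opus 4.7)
By Lemma~\ref{summand-1}, it is enough to prove that $\M:=\conv\bigcup\K$ is a summand of $\left(\sum_{i=1}^n\tau_i\right)\KK$; indeed, being a summand implies sliding freely, which in particular gives a translate of $\left(\sum_{i=1}^n\tau_i\right)\KK$ containing $\M$, so $\lambda_k(\K)\le 1$. The plan is to reduce this summand assertion to a planar statement via two-dimensional orthogonal projections, and then to resolve the planar case by a support-function computation.

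\emph{Reduction to 2-dimensional projections.} Recall that $A$ is a summand of $B$ iff $h_B-h_A$ is sublinear on $\Re^d$. As positive homogeneity is automatic, only subadditivity is at issue, and subadditivity is a condition on pairs of vectors. Consequently it holds on $\Re^d$ iff it holds on every two-dimensional linear subspace $E\subseteq\Re^d$. But the restriction of a support function to $E$ is exactly the support function of the orthogonal projection $\proj_E$, so it suffices to show, for every $2$-plane $E$, that $\proj_E\M$ is a summand of $\left(\sum\tau_i\right)\proj_E\KK$.

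\emph{Using the $k$-IP hypothesis.} Observe that $k$-IP is monotone: if $k\le k'$ then $k$-IP implies $k'$-IP, because through any point of a $k'$-flat meeting $\conv\bigcup\K$ one can pass a $k$-subflat, which still meets $\conv\bigcup\K$ and hence some $\KK_i$. Since $k\le d-2$, the family $\K$ is in particular $(d-2)$-IP. Given a $2$-plane $E$ and $q\in\proj_E\M$, the fiber $q+E^\perp$ is a $(d-2)$-flat meeting $\conv\bigcup\K$, so by $(d-2)$-IP it meets some $\KK_i$ and $q\in\proj_E\KK_i$. Hence $\proj_E\M=\bigcup_i\proj_E\KK_i$, i.e., the projected family $\{\proj_E\xx_i+\tau_i\proj_E\KK\}$ is a planar $0$-IP family of positive homothets of $\proj_E\KK$.

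\emph{The planar case---main obstacle.} We are reduced to proving that if $\{\yy_i+\tau_i\KK':i=1,\dots,n\}$ is a family of positive homothets of a planar convex body $\KK'$ whose union $\M'$ is convex, then $\M'$ is a summand of $\left(\sum\tau_i\right)\KK'$. I would verify directly the sublinearity of the deficit $f(\uu):=\left(\sum\tau_i\right)h_{\KK'}(\uu)-h_{\M'}(\uu)$. Using $h_{\M'}(\uu)=\max_i[\langle\yy_i,\uu\rangle+\tau_i h_{\KK'}(\uu)]$, one obtains $f=\min_i h_{\A_i}$ with $\A_i:=-\yy_i+\left(\sum_{j\ne i}\tau_j\right)\KK'$. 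A minimum of support functions is not automatically sublinear, but the convexity of $\M'$ forces it to be so here: every boundary point of $\M'$ lies on the boundary of some $\yy_i+\tau_i\KK'$, so outward normals of $\M'$ coincide with outward normals of $\KK'$, and the face of $\M'$ in every direction $\uu$ is made up of faces of individual $\KK_i'$ lying on a common supporting line whose total extent is at most $\left(\sum\tau_i\right)$ times the corresponding face of $\KK'$ (this also follows from Lemma~\ref{lem:width}). This face-compatibility identifies $\min_i h_{\A_i}$ with the support function of $\bigcap_i\A_i$ and produces a planar convex body $\mathbf{L}$ with $\M'+\mathbf{L}=\left(\sum\tau_i\right)\KK'$, finishing the proof.
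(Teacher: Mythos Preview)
Your reduction to two-dimensional projections is correct and is exactly the paper's strategy: the paper invokes Schneider's Lemma~3.2.6, which is precisely the statement you prove directly via the observation that subadditivity of $h_B-h_A$ is a pairwise condition. Your verification that the projected family is $0$-IP is also fine.

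The gap is in the planar case. You correctly compute $f=\min_i h_{\A_i}$, but the last paragraph does not actually prove that this minimum is sublinear. Two specific problems. First, the reference to Lemma~\ref{lem:width} is misplaced: that lemma bounds the \emph{width} of $\conv\bigcup\K$, not the length of any face. Second, and more seriously, the face-length inequality you state (the face of $\M'$ in direction $\uu$ has length at most $(\sum\tau_i)$ times the face of $\KK'$) is \emph{not} by itself sufficient for $\M'$ to be a summand. The correct planar criterion is the inequality $S_1(\M',\cdot)\le(\sum\tau_i)\,S_1(\KK',\cdot)$ of first surface area measures on $\Sph^1$; your face-length claim captures only the atomic part of this. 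The sentence ``this face-compatibility identifies $\min_i h_{\A_i}$ with the support function of $\bigcap_i\A_i$'' is exactly the conclusion to be proved, and nothing preceding it establishes it.

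For comparison, the paper handles the planar case quite differently: first for polygons via Alexandrov's theorem (which \emph{is} the surface-area-measure criterion for polytopes), and then for general $\KK'$ by a careful polygonal approximation argument that preserves the $0$-IP structure and tracks a fixed boundary point through the limit. Your route can be repaired: since $\partial\M'\subseteq\bigcup_i\partial\KK_i'$ and outer normals agree at common boundary points, one obtains $S_1(\M',\omega)\le\sum_i S_1(\KK_i',\omega)=(\sum_i\tau_i)\,S_1(\KK',\omega)$ for every Borel $\omega\subseteq\Sph^1$, and then the planar Minkowski existence theorem produces the complementary summand. But that argument has to be written out; as it stands, the proof is incomplete at its crucial step.
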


\begin{proof}
Set $\overline{\KK} = \left( \sum_{i=1}^n \tau_i \right) \KK$.
First we prove the assertion for $d=2$. Then $k=0$ and thus, $\bigcup \K$ is convex.

Assume that $\KK$ is a polygon. Then any side of $\bigcup \K$ has the same external normal vector as some side $S$ of $\KK$. Furthermore, the length of this side is at most $\left( \sum_{i=1}^n \tau_i \right) |S|$, where $|S|$ denotes the length of $S$. Hence, we may apply a theorem of Alexandrov \cite{Alexandrov}, that states that under this condition there is a translate of $\bigcup \K$ contained in $\overline{\KK}$. In addition, as is remarked in \cite{BB86}, a translation that translates a side of $\bigcup \K$ into the corresponding side of $\overline{\KK}$ translates also $\bigcup \K$ into $\overline{\KK}$. This yields that $\bigcup \K$ slides freely in $\overline{\KK}$, or, in particular, that $\lambda_0(\K) \leq 1$. 

Now we consider a general plane convex body $\KK$. First, we show that for any $i \neq j$, the intersection $(\KK_i \cap \KK_j) \cap (\bd \bigcup \K)$ consists of at most two segments. Indeed, let $\xx$ be a point of $\KK_i \cap \KK_j$ in $\bd \bigcup \K$, and let $L_{\xx}$ be a supporting line of $\bigcup \K$ at $\xx$. If $\KK_i$ and $\KK_j$ are translates, then the vector translating $\KK_i$ into $\KK_j$ is parallel to $L_{\xx}$, and, clearly, in this case every other point of $\KK_i \cap \KK_j$ and $\bd \bigcup \K$  is contained in either $L_{\xx}$, or the supporting line of $\bigcup \K$ parallel to $L_{\xx}$. If $\KK_i$ and $\KK_j$ are homothetic copies, then the center of the homothety transforming $\KK_i$ into $\KK_j$ lies on $L_{\xx}$. 
Thus, the claim follows from the fact that there is no point of $\Re^2$ lying on three distinct supporting lines of $\bigcup \K$.

Let $\xx$ be an arbitrary boundary point of $\overline{\KK}$. We show that there is vector $\yy \in \Re^2$ such that $\xx \in \yy+\bigcup \K \subseteq \overline{\KK}$. Without loss of generality, we may assume that $\xx$ is the origin of $\Re^2$, implying, in particular, that $\xx \in \bd \KK$.

Let $L_t$, $t=1,2,\ldots, m$ be supporting lines of $\bigcup \K$, in counterclockwise order in $\bd \bigcup \K$ that contain the points of $\bd \bigcup \K$ belonging to more than one member of $\K$. Let $\Gamma_t$ denote the arc of $\bd \bigcup \K$ between $L_t$ and $L_{t+1}$ for every value of $t$. Since every member of $\K$ intersects $\bd \bigcup \K$ in a closed set, there is a value $i_t$ of $i$ such that $\Gamma_t$ belongs to $\KK_{i_t}$.

For any positive integer $s$, let $\QQ^s$ be a convex polygon obtained as the intersection of some (closed) supporting halfplanes of $\bigcup \K$, including those bounded by the $L_t$'s. Similarly, let $\PP^s$ be the convex polygon, circumscribed about $\KK$, bounded by those translates of the supporting halfplanes defining $\QQ^s$ that support $\KK$. Since we can choose arbitrarily many supporting halfplanes of $\bigcup \K$, we may assume that $\PP^s \subseteq (1+1/s) \KK$ and that a sideline $L$ of $\PP^s$ contains $\xx$. Note that then $L$ is a supporting line of $\overline{\KK}$ as well. Finally, we set $\PP^s_i = \xx_i + \tau_i \PP^s$. Then $\QQ^s = \bigcup_{i=1}^n \PP^s_i$. Indeed, any point of $\QQ^s \setminus \bigcup \K$ is contained in a region bounded by two sidelines of $\QQ^s$ and an arc $\Gamma_t$ for some value of $t$, which is covered by $\PP^s_{i_t}$. Thus, we may apply our result for polygons, which yields that any translation transferring the side of $\QQ^s$ associated to $L$ into the side of $\left(\sum_{i=1}^n \tau_i \right) \PP^s$ contained in $L$, transfers also $\QQ^s$ into $\left(\sum_{i=1}^n \tau_i \right) \PP^s \subseteq (1+1/s) \overline{\KK}$. Let $\yy_s$ be an associated translation vector such that $\xx\in\yy_s + \bigcup \K \subseteq \yy_s + \QQ^s$. Choosing the limit $\yy$ of a convergent subsequence of these vectors we have that $\xx \in \yy + \bigcup \K \subseteq \overline{\KK}$, implying that $\bigcup \K$ slides freely in $\overline{\KK}$ and therefore $\lambda_0(\K) \leq 1$.

Now we prove Theorem~\ref{thm:smallk} for $d > 2$. For any $2$-dimensional linear subspace $H$ of $\Re^d$ let $\proj_H: \Re^d \to H$ denote the orthogonal projection onto $H$. As $\proj_H(\conv \bigcup\K)=\conv(\bigcup_{i=1}^{n}\proj_H(\KK_i))$ therefore $\{ \proj_H \KK_i : i=1,2,\ldots, n \}$ is a $0$-IP family in $H$. Thus, the above proof of Theorem~\ref{thm:smallk} for $d=2$ and Lemma~\ref{summand-1} imply that $\bigcup_{i=1}^{n}\proj_H(\KK_i)=\conv(\bigcup_{i=1}^{n}\proj_H(\KK_i))=\proj_H(\conv \bigcup\K)$ is a summand of $\proj_H \overline{\KK}$. Now we apply Lemma 3.2.6 from \cite{Sch}, which states that if $\KK_1'$ and $\KK_2'$ are two convex bodies in $\Re^d$, and $\proj_H \KK_1'$ is a summand of $\proj_H \KK_2'$ for all $2$-dimensional linear subspaces $H$ in $\Re^d$, then $\KK_1'$ is a summand of $\KK_2'$. Hence, $\conv\bigcup \K$ is a summand of $\overline{\KK}$ and therefore Lemma~\ref{summand-1} implies that $\conv\bigcup \K$ slides freely in $\overline{\KK}$, or in particular, that $\lambda_k(\K) \leq 1$.
\end{proof}

\begin{rem}
We note that for $0$-impassable families, Theorem~\ref{thm:smallk} can be proved in a simple way. Consider two intersecting homothetic copies
$\KK_1 = \xx_1 + \tau_1 \KK$ and $\KK_2 = \xx_2 + \tau_2 \KK$ of $\KK$. Let $\KK'$ be the smallest positive homothetic copy of $\KK$ that covers $\KK_1 \cup \KK_2$.Then there is a pair of parallel supporting hyperplanes $H_1$ and $H_2$ of $\KK'$ such that for $i=1,2$, $H_i \cap \KK_i \neq \emptyset$. Let $\uu \in \Sph^{d-1}$ be a normal vector of $H_1$ and $H_2$. Since $\mathrm{width}_{\uu} (\KK') \leq \mathrm{width}_{\uu}(\KK_1)+\mathrm{width}_{\uu}(\KK_2)$, it follows that the homothety ratio of $\KK'$ is at most $\tau_1+\tau_2$.
Thus, if $\K=\{ \KK_i = \xx_i + \tau_i \KK: \xx_i\in \Re^2, \tau_i>0, i=1,2,\ldots, n\}$ is $0$-impassable, applying this observation yields the assertion. In particular, this proves Theorem~\ref{thm:smallk} for $d=2$. On the other hand, to be able to apply the argument in the last paragraph of the proof of Theorem~\ref{thm:smallk}, we need to prove more for $d=2$, namely that $\bigcup \K$ is a summand of $\left( \sum_{i=1}^n \tau_i \right) \KK$.
\end{rem}

For strictly convex bodies one can do more.

\begin{thm}\label{thm:strictlyconvex}
Let $\KK$ be strictly convex body in $\Re^d, d\ge 2$ and $0 \leq k \leq d-2$. If $\K = \{ \KK_i=\xx_i + \tau_i \KK: \xx_i\in \Re^d, \tau_i>0, i=1,2,\ldots, n\}$ is $k$-IP-family of positive homothetic copies of $\KK$ in $\Re^d$, then $\bigcup \K = \xx_{i^*} + \tau_{i^*} \KK=\KK_{i^*}$ for $\tau_{i^*}=\max\{\tau_i\ |\ i=1,2,\dots, n\}$.
\end{thm}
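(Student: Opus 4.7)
The plan is to combine Theorem~\ref{thm:smallk} with strict convexity of $\KK$. Set $M = \conv\bigcup\K$ and $\overline{\KK} = \bigl(\sum_i \tau_i\bigr)\KK$. By Theorem~\ref{thm:smallk}, $M$ is a summand of $\overline{\KK}$; since $\overline{\KK}$ is strictly convex, so is $M$, because a summand of a strictly convex body inherits strict convexity (any face of positive dimension of $M$ would survive, translated by any exposed point of the complementary summand, as a face of the Minkowski sum $\overline{\KK}$). Strict convexity of $M$ forces every boundary point of $M$ to be extreme, and extreme points of $\conv\bigcup\K$ lie in $\bigcup\K$, so $\bd M \subseteq \bigcup\K$; moreover, each $p \in \bd M$ lies on $\bd\KK_j$ for some $j$, since $p \in \inter\KK_j$ would entail $p \in \inter M$.

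The heart of the proof is a joint lemma: for distinct indices $j, l$, the set $\bd M \cap \bd\KK_j \cap \bd\KK_l$ contains at most one point. For any such $p$, write $p = \xx_j + \tau_j k_j = \xx_l + \tau_l k_l$ with $k_j, k_l \in \bd\KK$; the supporting hyperplane of $M$ at $p$ supports both $\KK_j$ and $\KK_l$, so its outer normal lies in the normal cones of $\KK$ at $k_j$ and at $k_l$. Strict convexity of $\KK$ makes normal cones at distinct boundary points disjoint (each direction has a unique maximizer on $\KK$), forcing $k_j = k_l$. Equating the two expressions for $p$ then yields $k_j = k_l = (\xx_j - \xx_l)/(\tau_l - \tau_j)$ (or $\xx_j = \xx_l$ and $\KK_j = \KK_l$ if $\tau_j = \tau_l$), which pins $p$ down uniquely and places $(\xx_j - \xx_l)/(\tau_l - \tau_j)$ on $\bd\KK$, meaning one of $\KK_j, \KK_l$ tangentially contains the other.

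Set $A_j = \bd M \cap \bd\KK_j$; these are closed, cover $\bd M$, and $T := \bigcup_{j \neq l}(A_j \cap A_l)$ is finite by the joint lemma. On $\bd M \setminus T$ each point is assigned to a unique $A_j$, giving a disjoint clopen partition of that subspace. For $d \geq 3$, $\bd M \setminus T$ is homeomorphic to $\Sph^{d-1}$ minus finitely many points and is therefore connected, so only one piece $A_{i^*} \setminus T$ is non-empty, and taking closures yields $A_{i^*} = \bd M$. The main hurdle is the planar case $d = 2$, where $\bd M \setminus T$ splits into open arcs and connectedness fails. I resolve this by reusing the joint lemma: if two arcs of $\bd M \setminus T$ that share an endpoint $p \in T$ were labelled by distinct $j, l$, then $p \in A_j \cap A_l$ and (say) $\KK_j \subseteq \KK_l$ tangentially at $p$, so $\bd\KK_j \setminus \{p\} \subseteq \inter\KK_l \subseteq \inter M$, forcing $A_j \subseteq \{p\}$ and contradicting that $j$ was assigned a whole arc. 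Hence all arcs carry a single label $i^*$, and $A_{i^*} = \bd M$ follows by closedness. Finally, $\bd M \subseteq \bd\KK_{i^*}$ gives $M = \conv\bd M \subseteq \KK_{i^*} \subseteq M$, so $M = \KK_{i^*}$; consequently $\bigcup\K = M = \KK_{i^*}$, and the containments $\KK_j \subseteq \KK_{i^*}$ for all $j$ force $\tau_{i^*} = \max_i \tau_i$.
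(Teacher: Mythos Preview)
Your proof is correct, and it follows a genuinely different path from the paper's. The paper never invokes Theorem~\ref{thm:smallk}; instead it projects $\K$ onto the orthogonal complement $A^\perp$ of an arbitrary $k$-dimensional subspace $A$, observes that the projected family has convex union (this is exactly the $k$-IP condition), and argues directly in $A^\perp$ using the positive homothety $h$ carrying $K_i^p$ to $K_j^p$: if a boundary point $p$ of the union lay in two projected bodies with neither containing the other, then $p$ and $h(p)$ would span a segment in $\bd K_j^p$, contradicting strict convexity of the projection. Having shown that some $K_{i^*}^p$ equals the whole projected union, the paper checks that $i^*$ does not depend on $A$ and concludes.

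Your route trades the projection machinery for the summand conclusion of Theorem~\ref{thm:smallk}, from which you extract strict convexity of $M=\conv\bigcup\K$ and hence $\bd M\subseteq\bigcup_j\bd\KK_j$; after that your joint lemma plays the same role as the paper's homothety argument, and you finish by a connectedness argument on $\bd M\cong\Sph^{d-1}$. The cost of your approach is the dependence on Theorem~\ref{thm:smallk} and the need to treat $d=2$ separately (since $\Sph^1$ minus finitely many points is disconnected); the benefit is that you work entirely in $\Re^d$ without projections, and your joint lemma gives slightly sharper structural information (the internally tangent containment $\KK_j\subseteq\KK_l$ whenever two bodies share a boundary point with $M$). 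Both arguments ultimately hinge on the same phenomenon: strict convexity forces any two positive homothets that share a supporting hyperplane at a common boundary point to be nested.
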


\begin{proof}
Let $A$ be an arbitrary $k$-dimensional linear subspace $A$ of $\Re^d$ and let $A^\perp$ denote the orthogonal complement of $A$ in $\Re^d$. Moreover, let $\proj_{A^\perp}: \Re^d \to A^\perp$ denote the orthogonal projection onto $A^\perp$. Note that for every $A$, we have $\conv \left(\bigcup_{i=1}^n \proj_{A^\perp}(\KK_i) \right) = \bigcup_{i=1}^n  \proj_{A^\perp}(\KK_i) $. Furthermore, if $\KK$ is strictly convex, then so is $\proj_{A^\perp}(\KK)$.

We show that there is an $i^*=i^*(A)$ such that $\conv \left(\bigcup_{i=1}^n  \proj_{A^\perp}(\KK_i)  \right) = \proj_{A^\perp} (\KK_{i^*})$. Let $K^p_i = \proj_{A^\perp} (\KK_i)$, and assume, for contradiction, that $\bd (\bigcup_{i=1}^n K^p_i)$ is not covered by any of the $(K^p_i)$'s. Then there is a point $\pp \in \bd (\bigcup_{i=1}^n K^p_i)$, contained in $K^p_i \cap K^p_j$ for some $i \neq j$ such that $K^p_i \not\subseteq K^p_j \not\subseteq K^p_i$. Let $h$ be the (positive) homothety in $A^\perp$ that transforms $K^p_i$ into $K^p_j$, and let $\qq=h(\pp)$. Then, if $H$ is a closed supporting halfspace of $\bigcup_{i=1}^n K^p_i$ in $A^\perp$, then $H$ and $h(H)$ supports $K^p_j$ at $\pp$ and at $\qq$, respectively. As $H$ and $h(H)$ have the same outer unit normals, they coincide. Thus, if $\qq \neq \pp$, then the segment $[\pp,\qq]$ is contained in $K^p_j$, contradicting our assumption that $\KK$ is strictly convex. Thus, $\qq=\pp$ is the center of $h$, which implies that $K_i^p \subseteq K^p_j$, or that  $K_j^p \subseteq K^p_i$, a contradiction.

Hence, there exists an $i^*=i^*(A)$ such that $\conv \left(\bigcup_{i=1}^n \proj_{A^\perp}(\KK_i)  \right) =\proj_{A^\perp}  (\KK_{i^*})$. We note that $\conv \left(\bigcup_{i=1}^n  \proj_{\overline{A}^\perp}(\KK_i) \right)$ contains $ \proj_{\overline{A}^\perp}(\KK_{i^*})$ for all $k$-dimensional linear subspaces $\overline{A}$ of $\Re^d$, that is, a translate of $\lambda_{i^*}  \proj_{\overline{A}^\perp}(\KK)$, and thus, $i^*$ is independent of $A$. This yields that $\conv \bigcup \K = \KK_{i^*}$.
\end{proof}

\section{Acknowledgements}
The authors would like to thank the anonymous referees for careful reading and valuable comments
and proposing a shortcut in the proof of Theorem~\ref{centrally symmetric convex bodies}.

\section{Appendix}

For the convenience of the reader we include here the Maple code used in the the proof of Theorem~\ref{thm:3homothetic}.

{\small
\noindent
\texttt{>eq1 := t3*x2-x1*x2+x1*y3-y2*y3; eq2 := t1*x3-x2*x3+x2*y1-y1*y3;}

\noindent
\texttt{>eq3 := t2*x1-x1*x3+x3*y2-y1*y2;}

\noindent
\texttt{>Eq1 := simplify(subs(y1 = 1-x1-t1, subs(y2 = 1-x2-t2, subs(y3 = 1-x3-t3, eq1))));}

\noindent
\texttt{>Eq2 := simplify(subs(y1 = 1-x1-t1, subs(y2 = 1-x2-t2, subs(y3 = 1-x3-t3, eq2))));}

\noindent
\texttt{>Eq3 := simplify(subs(y1 = 1-x1-t1, subs(y2 = 1-x2-t2, subs(y3 = 1-x3-t3, eq3))));}

\noindent
\texttt{>T := solve([Eq1, Eq2], [t2, t3], explicit = true);}

\noindent
\texttt{>F := simplify(t1+subs(T[1][1], t2)+subs(T[1][2], t3)+a*subs(T[1][1], subs(T[1][2], Eq3)));}

\noindent
\texttt{>F1 := factor(simplify(diff(F, t1))); F2 := factor(simplify(diff(F, x1)));}

\noindent
\texttt{>F3 := factor(simplify(diff(F, x2))); F4 := factor(simplify(diff(F, x3)));}

\noindent
\texttt{>F5 := factor(simplify(diff(F, a)));}

\noindent
\texttt{>sols := solve([F1, F2, F3, F4, F5], [t1, x1, x2, x3, a], explicit = true, allsolutions = true);}
}

\noindent K\'aroly Bezdek \\
\small{Department of Mathematics and Statistics, University of Calgary, Calgary, Canada}\\
\small{Department of Mathematics, University of Pannonia, Veszpr\'em, Hungary}\\
\small{\texttt{bezdek@math.ucalgary.ca}}

\noindent and

\noindent Zsolt L\'angi \\
\small{Department of Geometry, Budapest University of Technology, Budapest, Hungary}\\
\small{\texttt{zlangi@math.bme.hu}}


\begin{thebibliography}{99}

{\small
\bibitem{Alexandrov} A.D. Alexandrov, \emph{Konvexe Polyeder}. Mathematische Lehrb\"ucher und Monographien. Abt. Math. Monographien, Bd. VIII, Akademie-Verlag, Berlin (1958), 239-241.

\bibitem{BB86} A. Bezdek and K. Bezdek, \emph{When is it possible to translate a convex polyhedron into another one}, Studia Sci. Math. Hungar. \textbf{26} (1986), 337--342.

\bibitem{BeLi}
K. Bezdek and A. E. Litvak, \emph{Packing convex bodies by cylinders}, Discrete Comput. Geom. \textbf{55/3} (2016), 725--738.

\bibitem{FeKu}
G. Fejes T\'oth and W. Kuperberg, \emph{Packing and covering with convex sets}, in Handbook of Convex Geometry (Eds.: P. M. Gruber and J. M. Wills), Elsevier, Amsterdam (1993), 799--860. 

\bibitem{GG}
A. W. Goodman and R. E. Goodman, \emph{A circle covering theorem}, Amer. Math. Monthly \textbf{52} (1945), 494--498.

\bibitem{Lu}
E. Lutwak, \emph{Containment and circumscribing simplices}, Discrete Comput. Geom. \textbf{19/2} (1998), 229--235.

\bibitem{Sch}
R.~Schneider, \emph{Convex bodies: the Brunn-Minkowski theory}, Encyclopedia of Mathematics and its Applications \textbf{44}, Cambridge University Press, Cambridge, 1993.
 }
\end{thebibliography}
\end{document}